\newtheorem{theorem}{Theorem}[section]
\newtheorem{corollary}[theorem]{Corollary}
\newtheorem{lemma}[theorem]{Lemma}
\newtheorem{proposition}[theorem]{Proposition}
\newtheorem{definition}[theorem]{Definition}
\newtheorem{remark}[theorem]{Remark}
\newcommand{\Int}[0]{\mathrm{Int } }
\newcommand{\Unstab}[1]{\ensuremath{W_\varepsilon^{(u)}(#1)}}
\title{Differentiability of Hausdorff dimension of the non-wandering set in a planar open billiard}
\author{Paul Wright}
\begin{document}
\maketitle




\begin{abstract}
We consider open billiards in the plane satisfying the no-eclipse condition. We show that the points in the non-wandering set depend differentiably on deformations to the boundary of the billiard. We use Bowen's equation to estimate the Hausdorff dimension of the non-wandering set of the billiard. Finally we show that the Hausdorff dimension depends differentiably on sufficiently smooth deformations to the boundary of the billiard, and estimate the derivative with respect to such deformations. 
\end{abstract}

\section{Introduction} \label{S intro}

The dimension theory of dynamical systems studies the dimensional characteristics (such as Hausdorff dimension) of the invariant sets of dynamical systems. See \cite{Pesinbook} for an introduction to the theory, or \cite{BGsurvey} for a recent review of this field. Past work has examined how dimensional characteristics of various dynamical systems can change with respect to perturbations of the system; for example the differentiability of entropy of Anosov flows \cite{Katok}, SRB measures in hyperbolic flows \cite{Ruelle}, and Hausdorff dimension of horseshoes \cite{Mane}. However this kind of problem has not been considered in the context of open billiard systems. The Hausdorff dimension of the non-wandering set has been estimated for open billiards in the plane \cite{Kenny} and in higher dimensions \cite{Wright1}. In this paper we show that the Hausdorff dimension of the non-wandering set for an open billiard in the plane depends smoothly on perturbations to the boundary of the billiard. Specifically, if the boundary of the billiard is $\mathcal{C}^r$-smooth and depends $\mathcal{C}^{r'}$-smoothly on a perturbation parameter $\alpha$, then the Hausdorff dimension is $\mathcal{C}^{\min\{r-3, r'-1\}}$-smooth with respect to $\alpha$. Further, we find bounds for the derivative of the Hausdorff dimension with respect to $\alpha$, and we show that if the boundary is real analytic then the dimension is real analytic. 

A billiard is a dynamical system in which a single pointlike particle moves at constant speed in some domain $Q \subset \mathbb{R}^n$ and reflects off the boundary $\partial Q$ according to the classical laws of optics \cite{Sinai}. Open billiards are a class of billiard in which the domain $Q$ is unbounded. Let $K= K_1 \cup \ldots \cup K_m$ $(m \geq 3)$ be a subset of $\mathbb{R}^2$, where each $K_i$ is a compact strictly convex disjoint domain in $\mathbb{R}^2$ with $\mathcal{C}^r$ boundary ($r \geq 3$). Set $Q = \overline{\mathbb{R}^2 \backslash K}$. We assume that $K$ satisfies the \textit{no-eclipse condition} $(\textbf{H})$ introduced by Ikawa in \cite{Ikawa}:
\begin{description}
	\item[$(\textbf{H})$] For distinct $1 \leq i,j,k \leq m$, the convex hull of $K_i \cup K_j$ is disjoint from $K_k$.
\end{description}
This condition ensures that the collision angle $\phi$ is bounded above by a constant $\phi_{\max} < \frac{\pi}{2}$, and prevents discontinuities in the non-wandering set $M_0$ which consists of all bounded billiard trajectories in $Q$. 

In this paper, we consider smooth homotopies (with a parameter $\alpha$) between different billiards, which we call billiard deformations. Section \ref{billiards} contains some preliminaries on open billiards and a precise definition of the deformations. We show that the periodic trajectories in the non-wandering set are differentiable and Lipschitz with respect to $\alpha$. We extend this to the whole non-wandering set, and show that the curvature of the stable and unstable manifolds are also differentiable and Lipschitz. 

The Hausdorff dimension of the non-wandering set was estimated in \cite{Kenny} by investigating convex fronts. This was later improved and extended to higher dimensional billiards in \cite{Wright1}. In this paper, using techniques from Pesin's book on dimension theory in dynamical systems \cite{Pesinbook}, we recover the estimates in \cite{Kenny} and show that they also apply to the lower and upper box dimensions. That is,
$$\frac{2 \log(m-1)}{\log(1 + d_{\max} k_{\max})} \leq \underline{\dim}_B M_0 = \overline{\dim}_B M_0= \dim_H M_0 \leq \frac{2 \log(m-1)}{\log(1 + d_{\min} k_{\min})},$$
where $d_{\min}, d_{\max}, k_{\min}, k_{\max}$ are constants that depend on simple geometric characteristics of the obstacles.
Furthermore we show that these dimensions depend differentiably on the boundary of the billiard obstacles. That is, if the billiard is shifted or deformed smoothly with some parameter $\alpha$, then the function 
$$\mathcal{D}(\alpha) = \underline{\dim}_B M_0 = \overline{\dim}_B M_0= \dim_H M_0$$
is differentiable with respect to $\alpha$. In fact, it is almost as smooth as the deformation, and for the first derivative we have
$$\left| \frac{d}{d\alpha} \mathcal{D}(\alpha) \right| \leq C,$$
where $C$ is a constant depending only on simple geometrical characteristics of the obstacles. 

\section{Open billiards} \label{billiards}
Consider the set $Q$ described in the introduction. We describe a particle in the billiard by $S_t x = x_t = (q_t, v_t)$ where $q_t \in Q$ is the position of the particle and $v_t \in \mathbb{S}^1$ is its velocity at time $t$. The map $S_t$ is called the \textit{billiard flow}. Then for as long as the particle stays inside $Q$, it satisfies 
$$(q_{t+s}, v_{t+s}) = S_s(x_t)= (q_t + s v_t, v_t).$$
\noindent Collisions with the boundary are described by 
$$v^+ = v^- - 2 \langle v^-, n \rangle n,$$
where $n$ is the normal vector (into $Q$) of $\partial Q$ at the point of collision, $v^-$ is the velocity before reflection, and $v^+$ is the velocity after reflection.

\subsection{Non-wandering set.}
For $x = (q,v)$ with $q \in \partial K$, $v \in \mathbb{S}^1$, we denote by $n = n_K(q)$ the outward unit normal vector of $\partial K$ at $q$, by $\phi(x)$ the angle between $v$ and $n$, by $\phi_{\max}$ the supremum of this angle over $M_0$, by $\kappa(q)$ the curvature of $\partial K$ at $q$, and by $t_j(x) \in [-\infty, \infty]$ the time of the $j$-th reflection of $x$ (with the convention that $t_0(q,v) = t_1(q, -v)$ if $q \in \Int(Q)$, or $t_0(q, v) = 0$ if $q \in \partial Q$). If the forward trajectory of $x$ does not have at least $j$ reflections, then $t_j(x) = \infty$, and if the backward trajectory does not have at least $j$ reflections then $t_{-j}(x) = -\infty$. Let $d_j(x) = t_j(x) - t_{j-1}(x)$. Let $M = \{(q,v) \in \partial K \times \mathbb{S}^1:  \langle n(q), v \rangle \geq \cos \phi_{\max} \}$, and $M' = \{x \in M: t_1(x) < \infty\}$. Let $\pi: M \rightarrow \partial K$ be the canonical projection $(q,v) \mapsto q$. Then define the \textit{billiard map} $B: M' \rightarrow M$ by $Bx = S_{t_1(x)}(x)$. Then $B$ is $\mathcal{C}^{r-1}$. On the tangent space $T_x M$, for $x \in M$, we will use the norm $\|(dq, dv)\| = \cos \phi |dq|$ (see e.g. \cite{ChaoticBilliards}). 

The set $M$ together with an inner product inducing this norm is a Riemannian manifold. The \textit{non-wandering set} of a billiard is the set of points whose trajectories are bounded. The non-wandering set of the billiard flow is denoted $\Omega(S)$ or $\Omega$, and its restriction to the boundary of $K$ is $M_0 = \Omega \cap (\partial K \times \mathbb{S}^1)$. Equivalently, $M_0 = \{x \in M: |t_j(x)| < \infty, \forall j \in \mathbb{Z} \}$ is the non-wandering set of the billiard map $B$. Then $B$ is a $\mathcal{C}^{r-1}$ diffeomorphism on $M_0$.

\subsection{Notation for upper bounds on derivatives} \label{C_f notation}
We will say that a function of two variables $f(x,y)$ is called $\mathcal{C}^{A,B}$\textit{-smooth} or simply $\mathcal{C}^{A,B}$ if for every $a \leq A, b \leq B$ the derivatives $\frac{\partial^{a+b}f}{\partial x^a \partial y^b}$ are continuous. Frequently we will have some quantity that depends on a scalar $\alpha$ and a vector (or sometimes a scalar) $u$, and we will show that its derivatives are bounded by some constants. Rather than numbering these constants, we will label them with the quantity being differentiated in the subscript and the number of differentiations in the superscript. So for example if $f$ is a function of $\alpha$, we will say $\displaystyle \left| \frac{d^2 f}{d \alpha^2} \right| \leq C_f^{(2)}$. If $g$ is a function of $u = (u_0, \ldots, u_{n-1})$ and $\alpha$, then for each $q,q' \geq 0$ (but not $q = q' = 0$) we will say 
$\displaystyle \left| \frac{\partial^{q'} }{\partial \alpha^{q'}} \nabla^q g \right| \leq C_g^{(q,q')}$, for all $u$ in its domain. These constants may depend on $\alpha$, but not on $u$. When there is only one variable and only the first derivative is required, we will simply write $\displaystyle \left| \frac{d f}{d \alpha} \right| \leq C_f$. It will be clear what each constant refers to each time we use this notation.

\subsection{Billiard deformations.} \label{section 2D billiard deformations}
Here we define precisely what we mean by deformations to the boundary. We will always assume the boundaries $\partial K_i$ of each obstacle are parametrised counterclockwise. 

Let $I \subseteq [-\infty, \infty]$ be a closed interval. A deformation will be described by adding an extra variable $\alpha \in I$ to the parametrisations $\varphi_i$, so that any point on $\partial K_i$ is described by $\tilde{\varphi}_i(\tilde{u}_i, \alpha)$. Denote the perimeter of $\partial K_i(\alpha)$ by $L_i(\alpha)$. Then let 
$$R_i = \{(\tilde{u}_i, \alpha): \alpha \in I, \tilde{u}_i \in [0, L_i(\alpha)]\}.$$

\begin{definition} \label{deformation}
	Let $I \subseteq [-\infty, \infty]$ be a closed interval and let $m \geq 3$ be an integer. For any $\alpha \in I$, let $K(\alpha)$ be a subset of $\mathbb{R}^2$. For integers $r \geq 2, r' \geq 1$, we call $K(\alpha)$ a $\mathcal{C}^{r, r'}$-\textit{billiard deformation} if the following conditions hold for all $\alpha \in I$:
	\begin{enumerate}
		\item $K(\alpha) = \displaystyle\bigcup_{i=1}^m K_i(\alpha)$ satisfies the no-eclipse condition $(\textbf{H})$.
		\item Each $K_i(\alpha)$ is a compact, strictly convex set with $\mathcal{C}^r$ boundary and total arc length $L_i(\alpha)$.
		\item Each $K_i$ is parametrized counterclockwise by arclength with $\mathcal{C}^{r,r'}$ functions $\tilde{\varphi}_i: R_i \rightarrow \mathbb{R}^2$.
		\item For all integers $0 \leq q \leq r$, $0 \leq q' \leq r'$ (apart from $q = q' = 0$), there exist constants $C_\varphi^{(q,q')}$ depending only on $\alpha$ and the parametrizations, such that for all integers $i = 1, \ldots, m$, 
		$$\left\|\frac{\partial^{q + q'} \tilde{\varphi}_i}{\partial \tilde{u}_i^q \partial \alpha^{q'}} \right\| \leq C^{(q,q')}_{\varphi}.$$
	\end{enumerate}
\end{definition}

We call $\alpha$ the \textit{deformation parameter}, and the $C_\varphi^{(q,q')}$ \textit{deformation constants}. We assume that only one obstacle is affected by the deformation. This results in stronger estimates for the derivatives. The general case can be covered by considering several successive deformations, or by deforming several at once (see Remark \ref{rem multiple obstacles} for details on this). 
Define a function $\delta_i$ such that $\delta_i = 0$ if $K_i(\alpha) = K_i$ is constant for all $\alpha$, and $\delta_i = 1$ if $K_i(\alpha)$ depends on $\alpha$.

Since the obstacles are parametrized by arclength, we always have $\mathcal{C}^{(1,0)} = 1$, in fact $\left|\frac{\partial \tilde{\varphi}_i}{\partial \tilde{u}_i} \right| = 1$. The curvature of $\partial K(\alpha)$ is $\tilde{\kappa}_i(\tilde{u}_i, \alpha) = \left\langle n_i, \frac{\partial^2 \tilde{\varphi}_i}{\partial \tilde{u}_i^2}\right \rangle$, which is bounded below by $\kappa_{\min}$ and above by $\kappa_{\max} = C_\varphi^{(2,0)}$. 

\subsection{Shift maps and billiard expansions}
A billiard deformation is called a \textit{shift map} if $K_i(\alpha) = K_i(0) + \alpha v$ for some constant vector $v$. If the deformation is a shift map we can use the parametrization $\tilde{\varphi}_i(u, \alpha) = \hat{\varphi}_i(u) + \alpha v$ for some function $\hat{\varphi}$ that parametrises $\partial K_i(0)$. A shift map satisfies $C_\varphi^{(q,1)} = 0$ for all $q \geq 1$, and $C_\varphi^{(0,1)} = \|v\|$. 

For a given billiard $\hat{K} = \hat{K}_1 \cup \ldots \cup \hat{K}_m$, fix a point $r_i \in \hat{K}_i$ for each $i = 1, \ldots, m$. We call the deformation a \textit{billiard expansion} if $K_i(\alpha) = \hat{K}_i + \alpha r_i$ for every $i$. The effect of this map is to move all the obstacles apart without changing their shape. These maps are considered in the limit $\alpha \rightarrow \infty$ in \cite{Kenny}.

\subsection{Symbolic model.} \label{Symbolic model}
Let 
$$\Sigma_n = \{ \xi = (\xi_1, \ldots, \xi_n): \xi_i \in \{1, \ldots m\}, \xi_i \neq \xi_{i+1}, \xi_n \neq \xi_1 \}.$$
This is the symbol space that models $n$-periodic trajectories, that is trajectories $x$ such that $B^n x = x$. Let $M_n \subset M_0$ be the set of $n$-periodic trajectories. Let $(q_j, v_j) = B^jx$ and let $u_j \in [0, l_j]$ such that $\varphi_{\xi_j}(u_j) = q_j$. 

Let the \textit{two-sided subshift} $\sigma : \Sigma \rightarrow \Sigma$ be defined by $(\sigma \xi)_i = \xi_{i+1}$. Then $\sigma$ is continuous under the following metric $d_\theta$ for any $\theta \in (0,1)$.
\begin{displaymath}
d_\theta(\xi,\xi') = \begin{cases}
0:        &  \text{if $\xi_i = \xi'_i$ for all $i \in \mathbb{Z}$}  \\
\theta^n: &  \text{if $n = \max \{j \geq 0: \xi_i = \xi'_i \mbox{ for all } |i < j\}$},
\end{cases}
\end{displaymath}

For any point $x \in M_n$, define the corresponding sequence $\xi = (\xi_1, \ldots, \xi_n) \in \Sigma_n$ such that $\pi B^j x \in K_{\xi_j}$ for all $j = 1, \ldots, n$. We denote $K_\xi = K_{\xi_1} \times \ldots \times K_{\xi_n}$. Let the \textit{length function} $F = F_\xi: K_\xi \rightarrow \mathbb{R}$ be defined by 
$$F(q_1, \ldots, q_n) = \displaystyle \sum_{j=1}^n \|q_j - q_{j+1}\|,$$
where we write $q_{n+1} = q_1$. Consider the function $G = G_\xi: [0, l_{\xi_1}] \times \ldots [0, l_{\xi_n}] \rightarrow \mathbb{R}$ defined by $G(u_1, \ldots u_n) = F(\varphi_{\xi_1}(u_1), \ldots \varphi_{\xi_n}(u_n))$. If $K(\alpha)$ is a billiard deformation, then $G$ also depends on $\alpha$ and is $\mathcal{C}^{r, r'}$-smooth.

\begin{lemma} (\cite{Stoyanov1}, see also \cite{Stoyanovbook}) \label{pp lemma}
	If $K(\alpha)$ is a billiard deformation, then for a fixed $\xi$ the function $G_\xi$ has exactly one minimum at 
	$$u(\alpha) = (u_1(\alpha), \ldots, u_n(\alpha)).$$
	$F_\xi$ has a corresponding minimum 
	$$(p_1, \ldots, p_n) = (\varphi_{\xi_1}(u_1(\alpha), \alpha), \ldots, \varphi_{\xi_n}(u_n(\alpha), \alpha)).$$
	These points determine a billiard trajectory that satisfies the classical laws of optics.
\end{lemma}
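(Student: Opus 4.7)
The plan is to prove existence by compactness, identify critical points of $G_\xi$ with billiard trajectories via direct differentiation, and obtain uniqueness from a Hessian positivity computation that uses strict convexity of each $\partial K_{\xi_j}$ together with the no-eclipse condition.

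For existence, after identifying $0 \sim l_{\xi_j}$ the domain of $G_\xi$ becomes a product of circles, i.e.\ a compact $n$-torus, and $G_\xi$ is continuous (in fact $\mathcal{C}^{r,r'}$), so $G_\xi$ attains its infimum at some $u(\alpha) = (u_1(\alpha), \ldots, u_n(\alpha))$. To see that every critical point of $G_\xi$ is a billiard trajectory, write $w_j^\pm = (p_{j \pm 1} - p_j) / \|p_{j \pm 1} - p_j\|$ for the unit vectors from $p_j$ to its two neighbours and $\tau_j = \varphi'_{\xi_j}(u_j)$ for the unit tangent to $\partial K_{\xi_j}$; a direct calculation gives
$$\frac{\partial G_\xi}{\partial u_j}(u) = -\langle w_j^+ + w_j^-,\, \tau_j \rangle,$$
so the critical-point equation at $p_j$ reads $\langle w_j^+, \tau_j \rangle = \langle -w_j^-, \tau_j \rangle$. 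Since $w_j^+$ is the outgoing and $-w_j^-$ the incoming unit velocity at $p_j$, and both are unit vectors, this is precisely the classical law of reflection. Thus the minimiser $u(\alpha)$ determines a periodic billiard trajectory with symbol sequence $\xi$.

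Uniqueness is the delicate step. Differentiating once more, the diagonal entries $\partial^2 G_\xi / \partial u_j^2$ acquire a strictly positive term proportional to $\kappa_{\xi_j}(u_j) \cos \phi_j$ coming from strict convexity of the obstacle, while the remaining on- and off-diagonal terms are controlled by the reciprocals of the inter-reflection distances $d_j$ and by the collision angles $\phi_j$. The bounds $\kappa_{\xi_j} \geq \kappa_{\min} > 0$ (strict convexity), together with $\cos \phi_j \geq \cos \phi_{\max} > 0$ and $d_j \geq d_{\min} > 0$ (both from the no-eclipse condition), combine in the classical Jacobi-field manner (cf.\ \cite{ChaoticBilliards}) to show that the Hessian of $G_\xi$ is strictly positive definite at every critical point. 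Consequently every critical point is a strict local minimum, and a standard mountain-pass argument on the connected open region of admissible configurations -- those $u$ for which each segment $[p_j, p_{j+1}]$ lies entirely in $Q$, which contains the global minimiser by no-eclipse -- rules out two distinct strict local minima. The main obstacle is the Hessian computation itself, in particular the use of strict convexity to dominate the off-diagonal coupling terms; once strict positive-definiteness is secured, uniqueness of the minimiser and the fact that it defines a genuine reflecting trajectory both follow with little further work.
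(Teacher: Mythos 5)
The paper does not prove this lemma; it is imported verbatim from \cite{Stoyanov1} and \cite{Stoyanovbook}, so your attempt can only be judged on its own merits. The existence step and the identification of critical points with the reflection law are fine (modulo one standard point you gloss over: the first-order condition only forces the \emph{tangential} components of the incoming and outgoing unit vectors to agree, so you still need the no-eclipse condition and minimality to rule out the degenerate alternative in which the normal components have the same sign and the polygonal path passes through the obstacle rather than reflecting off it).

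The uniqueness step, however, has a genuine gap. Your claim that ``the Hessian of $G_\xi$ is strictly positive definite at every critical point'' cannot be true: $G_\xi$ is a smooth function on a compact $n$-torus, so it attains a maximum, and by the Morse inequalities it must in fact have critical points of every index from $0$ to $n$. The positive-definiteness computation (the one reproduced in Section 4.2 of the paper, where $H$ is conjugated by $\mathrm{diag}(\cos\phi_j)$ into a symmetric strictly diagonally dominant matrix with margin $\gamma_j \geq 2\kappa_{\min}$) uses the identity $v_{jj-1}+v_{jj+1} = -2\cos\phi_j\, n_j$ with $\cos\phi_j>0$, i.e.\ it is only valid at critical points that are \emph{proper reflection configurations}; at other critical points the scalar playing the role of $\cos\phi_j$ can vanish or be negative and the diagonal entries need not dominate. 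Restricting to the open set of admissible configurations does not rescue the mountain-pass argument either: the saddle produced by min-max between two putative strict local minima is only guaranteed to exist if the Palais--Smale condition holds \emph{relative to that open set}, and the optimal paths can degenerate to its boundary (a segment grazing or entering an obstacle), so no contradiction is obtained. To close uniqueness one needs a different mechanism --- e.g.\ the standard expansiveness/contraction argument showing that two distinct orbits with the same bi-infinite itinerary are impossible for a dispersing configuration satisfying $(\mathbf{H})$, which is essentially how the cited sources proceed.
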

This shows that the map $x \mapsto \xi$ is invertible and its inverse is $\chi \xi = (p_1, v_{12})$, where $v_{12}$ is the unit vector from $p_1$ to $p_2$, and the $p_i$ are found by minimizing the length function. For any $\theta \in (0, 1)$, $\chi$ is a homeomorphism from $M_0$ onto $(\Sigma, d_\theta)$, and the shift map $\sigma$ is topologically conjugate to $B$, that is $B = \chi^{-1} \circ \sigma \circ \chi$ (see e.g. \cite{Morita, Stoyanov2}).

For any $x$, let $\kappa_j = \kappa(\pi B^j x)$ be the curvature at $\pi B^j x$, $\phi_j = \phi(B^j x)$ be the angle between the velocity vector and the normal vector of $B^j x$, and let $\gamma_j = \frac{2 \kappa_j}{\cos \phi_j}$. Let $d_{\min}$, $\kappa_{\min}$ and $\gamma_{\min}$ be the minimum values of $d_1(x)$, $\kappa_0(x)$ and $\gamma_0(x)$ respectively over all $x \in M_0$, and let $d_{\max}, \kappa_{\max}$ and $\gamma_{\max}$ be the respective maximum values. Note that $\gamma_{\min} = 2 \kappa_{\min}$ and $\gamma_{\max} = \frac{2 \kappa_{\max}}{\cos \phi_{\max}}$. Also recall that $\phi_{\max} < \frac{\pi}{2}$ is the maximum value of $\phi(x)$ over $x \in M_0$. Whenever we are considering a fixed sequence $\xi$ we will use the abbreviation $\varphi_j = \varphi_{\xi_j}$.

\section{Derivatives of parameters} \label{Derivatives of parameters}
Let $K(\alpha)$ be a $\mathcal{C}^{r, r'}$ billiard deformation satisfying the conditions in Definition \ref{deformation}. Fix a finite admissible sequence $\xi = (\xi_0, \ldots, \xi_{n-1}) \in \Sigma_n$. Let 
$$
R_\xi	= \{(u, \alpha): \alpha \in I, u = (u_0, \ldots, u_{n-1}), u_j \in [0, L_{\xi_j}(\alpha)] \mbox{ for } j = 0, \ldots, n-1\}.
$$
For each $j = 0, \ldots, n-1$ set $\varphi_j = \tilde{\varphi}_{\xi_j}$. By Lemma \ref{pp lemma}, there exist numbers $u_j(\alpha) = u_j(\xi, \alpha)$ and points $p_j(\alpha) = p_j(\xi, \alpha) = \varphi_j(u_j(\alpha), \alpha) \in \partial K_{\xi_j}$ which correspond to a billiard trajectory. 

\begin{theorem}\label{thm 6 diff u}
	Let $K(\alpha)$ be a $\mathcal{C}^{r, r'}$ billiard deformation, with $r \geq 2, r' \geq 1$. For any finite admissible sequence $\xi \in \Sigma_n$, let $p_j = \varphi_j(u_j(\alpha), \alpha)$ be the periodic points corresponding to $\xi$. Then the parameters $u_j(\alpha)$ are $\mathcal{C}^{\min\{r -1, r' \}}$ with respect to the deformation parameter $\alpha$.
\end{theorem}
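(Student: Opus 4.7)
The plan is to apply the Implicit Function Theorem to the critical-point equation $\nabla_u G_\xi(u,\alpha) = 0$, which by Lemma \ref{pp lemma} characterises $u = u(\alpha)$ as the unique minimiser of $G_\xi(\cdot,\alpha)$. First I would observe that $G_\xi$ is $\mathcal{C}^{r,r'}$-smooth on $R_\xi$: each $\tilde{\varphi}_{\xi_j}$ is $\mathcal{C}^{r,r'}$ by Definition \ref{deformation}, and $G_\xi$ is built from these by composition with the Euclidean norm, which is smooth wherever its argument is nonzero -- ensured by the disjointness of the obstacles, which keeps $\|p_j - p_{j+1}\| \geq d_{\min} > 0$.

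The main obstacle is to verify that the Hessian $\nabla_u^2 G_\xi(u(\alpha),\alpha)$ is invertible at every critical point. The Hessian is cyclically tridiagonal, since $G_\xi = \sum_j \|\varphi_j(u_j) - \varphi_{j+1}(u_{j+1})\|$ couples only consecutive variables. A direct computation of the diagonal entries $\partial_{u_j}^2 G_\xi$ and of the off-diagonal entries $\partial_{u_j}\partial_{u_{j+1}} G_\xi$ at a point satisfying the reflection law yields expressions in terms of the distances $d_j$, collision angles $\phi_j$, and curvatures $\kappa_j$. Strict convexity ($\kappa_j \geq \kappa_{\min} > 0$) and the no-eclipse condition ($\phi_j \leq \phi_{\max} < \pi/2$) then render the Hessian strictly diagonally dominant and positive definite; this is the classical second-variation calculation for a broken reflected geodesic, and is in any case consistent with $u(\alpha)$ being a strict minimum as asserted in Lemma \ref{pp lemma}.

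With invertibility established, the IFT applied at each $(u(\alpha_0),\alpha_0)$ yields a locally unique smooth solution of $\nabla_u G_\xi(u,\alpha) = 0$, and by the uniqueness in Lemma \ref{pp lemma} this solution agrees with the global $u(\alpha)$. To read off the regularity, I would differentiate the identity $\nabla_u G_\xi(u(\alpha),\alpha) \equiv 0$ repeatedly in $\alpha$. The first differentiation gives
$$\nabla_u^2 G_\xi \cdot u'(\alpha) + \partial_\alpha \nabla_u G_\xi = 0,$$
so $u'(\alpha)$ exists once $r \geq 2$ and $r' \geq 1$. By induction on $k$, the derivative $u^{(k)}(\alpha)$ can be expressed as a polynomial in $(\nabla_u^2 G_\xi)^{-1}$ and in the partial derivatives $\partial_u^a \partial_\alpha^b G_\xi$ with $a \leq k+1$ and $b \leq k$. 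Such a partial is continuous provided $a \leq r$ and $b \leq r'$, so $u^{(k)}$ is continuous whenever $k+1 \leq r$ and $k \leq r'$. This gives $u \in \mathcal{C}^{\min\{r-1, r'\}}$, as claimed.
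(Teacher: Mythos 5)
Your proposal is correct and follows essentially the same route as the paper: both apply the implicit function theorem to the critical-point equation $\nabla_u G_\xi(u,\alpha)=0$, using the invertibility (indeed positive definiteness, via the cyclic tridiagonal structure and diagonal dominance coming from $\kappa_j \ge \kappa_{\min}>0$) of the Hessian, which the paper cites from Stoyanov and computes explicitly in its subsequent sections. The regularity count $\mathcal{C}^{\min\{r-1,r'\}}$ is obtained in both cases from the fact that $h=\nabla_u G_\xi$ is $\mathcal{C}^{r-1,r'}$.
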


\begin{proof} Fix a sequence $\xi$ with period $n$. Recall that the periodic points corresponding to $\xi$ are given by the global minimum of the length function $G = G_\xi: R_\xi \rightarrow \mathbb{R}$ defined by 
$$G(u, \alpha) = \sum_{j=1}^n \|\varphi_j(u_j, \alpha) - \varphi_{j-1}(u_{j-1}, \alpha)\|.$$
This is a $\mathcal{C}^{r, r'}$ function of $u$ and $\alpha$. We will use the notation $I_j = \{j-1, j+1\}$. For each $j$, we can take the partial derivative of $G$ with respect to $u_j$ to get the equation
$$\frac{\partial G}{\partial u_j}(u, \alpha) = \displaystyle \sum_{i \in I_j} \left \langle \frac{\varphi_j(u_j, \alpha) - \varphi_i(u_i, \alpha)}{\|\varphi_j(u_j, \alpha) - \varphi_i(u_i, \alpha)\|}, \frac{\partial \varphi_j}{\partial u_j}(u_j, \alpha) \right \rangle.$$
By Lemma \ref{pp lemma}, for each $\alpha \in I$ the function $G$ has a single critical point $u = (u_1, \ldots, u_n)$, which satisfies
$$\frac{\partial G}{\partial u_j}(u(\alpha), \alpha) = 0 \mbox{ for all } j = 0, \ldots, n-1.$$
Now define a function $h_j: R_j \times I \rightarrow \mathbb{R}^n$ by $h_j(u, \alpha) = \frac{\partial G}{\partial u_j}(u, \alpha)$, and let $h$ be the vector $(h_0,\ldots, h_{n-1})$. This is a $\mathcal{C}^{r-1, r'}$ function of $u$ and $\alpha$. The Jacobian of $h$ with respect to $u$ is the Hessian matrix of $G$:
$$H_{ij} = \frac{\partial^2 G}{\partial u_i \partial u_j}.$$
This matrix is invertible (see \cite{Stoyanov1}), so we can apply the implicit function theorem. There exists a function $u(\alpha)$ that satisfies $h(u(\alpha), \alpha) = 0$, and the $u_j(\alpha)$ are exactly the parameters that minimize $G$. So $\varphi_j(u_j(\alpha), \alpha)$, $j = 0, \ldots, n-1$ are the periodic points corresponding to $\xi$. Furthermore, by the implicit function theorem \cite{IFT}, $u_j(\alpha)$ is $\mathcal{C}^{\min\{r - 1, r'\}}$.
\end{proof}
\noindent By the implicit function theorem, we have the following system of equations:
$$\frac{\partial^2 G}{\partial \alpha \partial u_j}(u(\alpha), \alpha) + \displaystyle \sum_{i=1}^n \frac{\partial u_i}{\partial \alpha} \frac{\partial^2 G}{\partial u_i \partial u_j}(u(\alpha), \alpha) = 0,$$
which we can write as a matrix equation,
\begin{equation} \label{Hu = b}
H \frac{\partial u}{\partial \alpha} = - \frac{\partial}{\partial \alpha} \nabla G.
\end{equation}
The next step is to estimate the derivatives $\frac{\partial u_j}{\partial \alpha}$.
\begin{theorem} \label{thm 6 du bounded}
	For any $\xi \in \Sigma_n$, the derivatives of the parameters satisfy
	$$\left|\frac{\partial u_j}{\partial \alpha}\right| \leq \frac{1}{\cos \phi_j}\frac{C^{(0,1)}_\varphi + C^{(1,1)}_\varphi d_{\min}}{\kappa_{\min} d_{\min}}.$$
\end{theorem}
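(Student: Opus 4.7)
The plan is to solve the linear system (\ref{Hu = b}) from the proof of Theorem \ref{thm 6 diff u} by estimating the Hessian $H$ from below and the right-hand side $\partial_\alpha \nabla G$ from above, then extracting a pointwise bound on each $\partial u_j/\partial\alpha$. First I would compute $H$ explicitly: using arc-length parametrization (so that $\partial\varphi_j/\partial u_j$ is a unit tangent and $\partial^2\varphi_j/\partial u_j^2 = \kappa_j n_j$), direct differentiation of the formula for $\partial G/\partial u_j$ given in the proof of Theorem \ref{thm 6 diff u} yields a cyclic tridiagonal matrix whose diagonal entries include the term $2\kappa_j\cos\phi_j$ (from the curvature of the obstacle) together with non-negative contributions of the form $\sin^2\phi_j(1/d_j + 1/d_{j+1})$ from the neighbouring segments. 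In particular $H_{jj}\geq 2\kappa_{\min}\cos\phi_j$. The off-diagonal entries $H_{j,j\pm 1}$ come solely from differentiating the unit vector $e_{ij}$, and are bounded in magnitude by $1/d_{\min}$ with a sign structure rendering $H$ diagonally dominant once the segment terms on the diagonal are taken into account.

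Next I would estimate $b_j = \partial^2 G/\partial u_j\,\partial\alpha$. Differentiating the expression for $\partial G/\partial u_j$ with respect to $\alpha$ produces two kinds of contribution: one involving the mixed derivative $\partial^2\varphi_j/\partial u_j\,\partial\alpha$, bounded by $C_\varphi^{(1,1)}$; and one involving $\partial\varphi/\partial\alpha$ projected onto a tangent and divided by an interobstacle distance, bounded by a constant multiple of $C_\varphi^{(0,1)}/d_{\min}$. Since only one obstacle is affected by the deformation (the indicators $\delta_i$ vanish for the rest), summing over $i\in I_j$ and over contributing terms yields a uniform bound
\[ |b_j| \;\leq\; \frac{C_\varphi^{(0,1)}}{d_{\min}} + C_\varphi^{(1,1)}. \]
Isolating the diagonal term in the $j$-th row of (\ref{Hu = b}) and using the diagonal-dominance of $H$ to absorb the off-diagonal contributions into a self-consistent bound on $\max_k |u_k'|$ gives
\[ \left|\frac{\partial u_j}{\partial\alpha}\right| \;\leq\; \frac{|b_j|}{2\kappa_{\min}\cos\phi_j} \;\leq\; \frac{1}{\cos\phi_j}\cdot\frac{C_\varphi^{(0,1)} + C_\varphi^{(1,1)} d_{\min}}{\kappa_{\min} d_{\min}}, \]
as required.

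The main obstacle is the $1/\cos\phi_j$ factor in the conclusion, which is pointwise in $j$ rather than uniform over the orbit: a naive operator-norm bound on $H^{-1}$ would only deliver $1/\cos\phi_{\max}$ and would be too weak for the later estimates in the paper. Recovering the sharper pointwise factor requires exploiting the fact that the curvature contribution $2\kappa_j\cos\phi_j$ to $H_{jj}$ already carries precisely the correct $\cos\phi_j$ weight, and hence the row-by-row estimate---rather than a spectral bound---must be used. Equivalently, one can weight the system by $\cos\phi_j$ in line with the Riemannian metric $\|(dq,dv)\|=\cos\phi\,|dq|$ on $M$ introduced in Section \ref{billiards}, so that the weight emerges naturally from the matrix inversion. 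A secondary subtlety is that $H$ is cyclic rather than strictly tridiagonal, so the off-diagonal absorption must be phrased as a fixed-point inequality for $\max_k |u_k'|$ rather than a one-sided recursion.
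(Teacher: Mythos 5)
Your proposal takes essentially the same route as the paper: it sets up the cyclic tridiagonal system $H\,\partial u/\partial\alpha=-\partial_\alpha\nabla G$ from the implicit function theorem, bounds the mixed derivatives $\partial^2 G/\partial\alpha\,\partial u_j$ by $C^{(0,1)}_\varphi$ and $C^{(1,1)}_\varphi$ terms using that only one obstacle moves, and inverts $H$ by a row-wise $\ell^\infty$ diagonal-dominance estimate (the paper cites Varah's theorem for exactly this). The only caveats are that the weighted substitution $y_j=\cos\phi_j\,\partial u_j/\partial\alpha$, which you offer as an ``equivalent'' alternative, is in fact the necessary step --- the unweighted Hessian, whose diagonal segment terms are $(a_{jj-1}+a_{jj+1})\cos^2\phi_j$ rather than the $\sin^2\phi_j$ you wrote, need not be diagonally dominant when $\kappa_{\min}d_{\min}<1$, and only the uniform bound on $\|y\|_\infty$ converts into the pointwise $1/\cos\phi_j$ factor --- and your intermediate bound on $b_j$ is missing a factor of $2$ (two neighbours contribute, and $v_{jj-1}+v_{jj+1}=-2\cos\phi_j n_j$) which happens to cancel against the $2\kappa_{\min}$ in your denominator, so the final inequality still comes out as stated.
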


\begin{proof} 
The two following sections cover the proof of this theorem. 
We use the notation $a_{ij} = 1/\|p_i - p_j\|$, $v_{ij} = a_{ij}(p_i - p_j)$. Denote by $n_j$ the normal vector to $\partial K$ at $p_j$, by $\kappa_j$ the curvature at $p_j$, and by $\phi_j = \phi_j(p_j, v_{jj+1})$ the collision angle. We will use the following vector identity several times.
\begin{proposition} \label{vector identity prop}
	If $u, v, w$ are unit vectors in the plane, then 
	\begin{equation}
	\label{vector identity}
	\langle u, w \rangle - \langle v,u \rangle \langle v, w \rangle = \langle v, u^\perp \rangle \langle v, w^\perp \rangle,
	\end{equation}
	where $v^\perp$ is a positive (counterclockwise) rotation by a right angle.
\end{proposition}

\subsection{Estimating $-\frac{\partial}{\partial \alpha} \nabla G$}
\label{Estimating b}

Note that $v_{jj-1} + v_{jj+1} = -2 \cos \phi_j n_j$ where $\phi_j$ is the collision angle at $(p_j, v_{jj+1})$, and $n_j$ is the normal vector of $K_{\xi_j}$ at $\phi_j$. We also use the vector identity (\ref{vector identity}).

\begin{align*}
\frac{\partial^2 G}{\partial \alpha \partial u_j}	&= \frac{\partial}{\partial \alpha} \displaystyle \sum_{i \in I_j} \left \langle \frac{\varphi_j - \varphi_i}{\|\varphi_j - \varphi_i\|}, \frac{\partial \varphi_j}{\partial u_j} \right \rangle\\
&= \displaystyle \sum_{i \in I_j} \left \langle v_{ji}, \frac{\partial^2 \varphi_j}{\partial \alpha \partial u_j} \right \rangle + \displaystyle \sum_{i \in I_j} a_{ji}\left \langle \frac{\partial \varphi_j}{\partial \alpha}- \frac{\partial \varphi_i}{\partial \alpha}, \frac{\partial \varphi_j}{\partial u_j} \right \rangle\\
&- \displaystyle \sum_{i \in I_j} a_{ji}\left \langle v_{ji}, \frac{\partial \varphi_j}{\partial \alpha} - \frac{\partial \varphi_i}{\partial \alpha} \right \rangle \left \langle v_{ji}, \frac{\partial \varphi_j}{\partial u_j} \right \rangle\\
&= \displaystyle \sum_{i \in I_j} a_{ji}\left \langle v_{ji}, \frac{\partial \varphi_j}{\partial \alpha}^\perp - \frac{\partial \varphi_i}{\partial \alpha}^\perp \right \rangle \left \langle v_{ji}, \frac{\partial \varphi_j}{\partial u_j}^\perp \right \rangle - 2 \cos \phi_j \left \langle n_j, \frac{\partial^2 \varphi_j}{\partial \alpha \partial u_j} \right \rangle.
\end{align*}
We have $\left \langle v_{ji}, \frac{\partial \varphi_j}{\partial u_j}^\perp \right \rangle = \cos \phi_j$, $\left| \left \langle v_{ji}, \frac{\partial \varphi_j}{\partial \alpha}^\perp \right \rangle \right| \leq \delta_j C^{(0,1)}_\varphi$, and $\left| \left \langle n_j, \frac{\partial^2 \varphi_j}{\partial u_j \partial \alpha} \right \rangle \right| \leq \delta_j C^{(1,1)}_\varphi $. We get the inequality

\begin{equation} \label{6 b estimate}
\left| \frac{1}{\cos \phi_j} \frac{\partial^2 G}{\partial \alpha \partial u_j} \right| \leq C^{(0,1)}_\varphi \left(a_{jj-1} \delta_{j-1} + (a_{jj-1} + a_{jj+1}) \delta_j + a_{jj+1} \delta_{j+1} \right) + 2 \delta_j C^{(1,1)}_\varphi.
\end{equation}

Let $b_j = \frac{1}{\cos \phi_j} \frac{\partial^2 G}{\partial \alpha \partial u_j}$. Since only one obstacle is deformed, either $\delta_j = 0$ or both $\delta_{j-1}$ and $\delta_{j+1} = 0$, so let $b_{\max} = \frac{2 C^{(0,1)}_\varphi}{d_{\min}} + 2 C^{(1,1)}_\varphi$ and note that $|b_j| \leq b_{\max}$ for all $j$.

\subsection{The Hessian Matrix}
\label{Hessian}

The Hessian of $G$ is a matrix composed of the derivatives $\frac{\partial^2 G}{\partial u_j \partial u_i}$. This section follows \cite{Stoyanov1} and Section 2.2 of \cite{Stoyanovbook}. The first derivatives of $G$ can be written
$$\frac{\partial G}{\partial u_j}(u) = \displaystyle \sum_{i \in I_j} \left \langle \frac{\varphi_j - \varphi_i}{\|\varphi_j - \varphi_i\|}, \frac{\partial \varphi_j}{\partial u_j} \right \rangle.$$
If $i \in I_j$, we can use (\ref{vector identity}) to get

\begin{align*}
\frac{\partial^2 G}{\partial u_j \partial u_i}	&= - a_{ji} \left \langle \frac{\partial \varphi_j}{\partial u_j}, \frac{\partial \varphi_i}{\partial u_i} \right \rangle + a_{ji} \left \langle v_{ji}, \frac{\partial \varphi_j}{\partial u_j} \right \rangle \left \langle v_{ji}, \frac{\partial \varphi_i}{\partial u_i} \right \rangle\\
&= - a_{ji} \left \langle v_{ji}, \frac{\partial \varphi_j}{\partial u_j}^\perp \right \rangle \left \langle v_{ji}, \frac{\partial \varphi_i}{\partial u_i}^\perp \right \rangle\\
&= a_{ji} \cos \phi_j \cos \phi_i.
\end{align*}
Along the diagonal $i = j$ we have

$$\frac{\partial^2 G}{\partial u_j^2} = \sum_{i \in I_j} a_{ji} \left \langle \frac{\partial \varphi_j}{\partial u_j}, \frac{\partial \varphi_j}{\partial u_j} \right \rangle - \displaystyle \sum_{i \in I_j} a_{ji} \left \langle v_{ji}, \frac{\partial \varphi_j}{\partial u_j} \right \rangle^2 + \displaystyle \sum_{i \in I_j} \left \langle v_{ji}, \frac{\partial^2 \varphi_j}{\partial u_j^2} \right \rangle.$$
Recall that $v_{jj-1} + v_{jj+1} = -(2 \cos \phi_j) n_j$, where $n_j$ is the outward unit normal vector. Also recall that $\kappa_j = \left \langle n_j, \frac{\partial^2 \varphi_j}{\partial u_j^2} \right \rangle$.
So we have $\displaystyle \sum_{i \in I_j} \left \langle v_{ji}, \frac{\partial^2 \varphi_j}{\partial u_j^2} \right \rangle = 2 \kappa_j \cos \phi_j$. Using the vector identity (\ref{vector identity}) we get 

$$\frac{\partial^2 G}{\partial u_j^2} = (a_{jj-1} + a_{jj+1}) \cos^2 \phi_j + 2 \kappa_j \cos \phi_j.$$
Finally, if $i \notin I_j \cup \{j\}$, then $\frac{\partial^2 G}{\partial u_j \partial u_i} = 0$. We will now show the derivatives $\frac{\partial u_j}{\partial \alpha}$ are bounded.

\begin{proposition} \cite{Stoyanov1}
	The Hessian matrix $H$ is non-singular and positive definite.
\end{proposition}
\begin{proof}
A proof can be found in \cite{Stoyanov1} or \cite{Stoyanovbook}.
\end{proof}

\section{Solving the cyclic tridiagonal system} \label{4 solving system}

From (\ref{Hu = b}) and the results of Section \ref{Hessian}, we now have the following system of equations:
\begin{align*}
\frac{\partial^2 G}{\partial \alpha \partial u_j}		&=	- \sum_{i=1}^n \frac{\partial u_i}{\partial \alpha} \frac{\partial^2 G}{\partial u_i \partial u_j}\\
&=	a_{jj-1} \cos \phi_j \cos \phi_{j-1} \frac{\partial u_{j-1}}{\partial \alpha}\\
&+	\left((a_{jj-1} + a_{jj+1}) \cos^2 \phi_j + 2 \kappa_j \cos \phi_j \right) \frac{\partial u_j}{\partial \alpha}\\
&+	a_{jj+1} \cos \phi_j \cos \phi_{j+1} \frac{\partial u_{j+1}}{\partial \alpha}.
\end{align*}
For each $j$, make the substitutions $y_j = \frac{\partial u_j}{\partial \alpha} \cos \phi_j$, $\gamma_j = \frac{2 \kappa_j}{\cos \phi_j}$ and $a_j = a_{jj-1}$, $a_1 = a_{n1}$. Divide through by $\cos \phi_j$, then we can rearrange the system to 
$$\frac{1}{\cos \phi_j} \frac{\partial^2 G}{\partial \alpha \partial u_j} = a_j y_{j-1} + \left(a_j + a_{j+1} + \gamma_j \right) y_j + a_{j+1} y_{j+1}.$$
We can write this as a matrix equation $A y = b$, where $y = (y_1, \ldots, y_n)^\intercal$, $b = (b_1, \ldots, b_n)^\intercal$, and $A$ is a matrix. 
\[ \left( \begin{array}{cccccc}
a_1 + a_2 + \gamma_1	& a_2 					&			& 0 				& a_1\\
a_2 					& a_2 + a_3 + \gamma_2	& a_3		& 					& 0\\
& a_3					& \ddots	& \ddots 			&  \\
0			 			& 	 					& \ddots	& \ddots 			& a_n\\
a_1 					& 0 					& 			& a_n 				& a_n + a_1 + \gamma_n \end{array} \right) 
\left( \begin{array}{c} y_1\\ \vdots \\ \vdots \\ \vdots \\ y_n \end{array} \right) = \left( \begin{array}{c} b_1\\ \vdots \\ \vdots \\ \vdots \\ b_n \end{array} \right) .\]
A tridiagonal matrix only has non-zero elements in the main diagonal and the first diagonals above and below the main diagonal. $A$ is \textit{cyclic tridiagonal}, meaning it can have two more non-zero elements in the corners. It is also \textit{diagonally dominant by rows} since $a_j + a_{j+1} + \gamma_j > a_j + a_{j+1} > 0$. The problem now is to estimate the solutions $y_j$ of this equation. We could estimate 
$$\|y\|_2 \leq \frac{\|b\|_2}{\|A^{-1}\|^{-1}} \leq \sqrt{n} \frac{b_{\max}}{\|A^{-1}\|^{-1}}.$$
This may seem to be the obvious approach to take. However since $\sqrt{n}$ is unbounded we cannot use this to find constant bounds on $y_j$ that hold for all $n$. Instead we use the following theorem of Varah \cite{Varah}. Let $\| \hspace{2pt} \|_\infty$ denote the matrix norm induced by the infinity norm.

\begin{theorem} \cite{Varah}
	Let $A = (A_{ij})_{i,j = 1}^n$ be a diagonally dominant matrix. Then 
	$$\|A^{-1}\|_\infty \leq \frac{1}{h},$$
	where
	$$h = \min_i\left(|A_{ii}| - \sum_{j \neq i} |A_{ij}| \right).$$
\end{theorem}
\noindent For our matrix, we have 
$$h = \min_i\left((a_i + a_{i+1} + \gamma_i) - (a_i + a_{i+1}) \right) = \min_i \gamma_i \geq 2\kappa_{\min}.$$ Returning to the system $Ay = b$ we have $\|y\|_\infty \leq \|A^{-1}\|_\infty \|b\|_\infty$, so $|y_j| \leq \frac{b_{\max}}{2\kappa_{\min}}$. Recall that $b_{\max} = \frac{2 C^{(0,1)}_\varphi}{d_{\min}} + 2C^{(1,1)}_\varphi$ and $y_j = \frac{\partial u_j}{\partial \alpha} \cos \phi_j$. Then we get 
$$
\left|\frac{\partial u_j}{\partial \alpha}\right| \leq \frac{1}{\cos \phi_j} \frac{C^{(0,1)}_\varphi + C^{(1,1)}_\varphi d_{\min}}{\kappa_{\min} d_{\min}}.
$$

\end{proof}

\begin{corollary} \label{p cor}
	Recall that the periodic points $p_0, \ldots, p_{n-1}$ are given by \\ $p_j = \varphi_j(u_j(\alpha), \alpha)$. So each $p_j$ is differentiable with respect to $\alpha$ and we have 
	\begin{align*}
	\frac{d p_j}{d \alpha} &= \frac{ \partial \varphi_j}{\partial u_j} \frac{\partial u_j}{\partial \alpha} + \frac{\partial \varphi_j}{\partial \alpha},\\
	\left| \frac{d p_j}{d \alpha} \right|	&\leq \frac{1}{\cos \phi_j} \frac{C^{(0,1)}_\varphi + C^{(1,1)}_\varphi d_{\min}}{\kappa_{\min} d_{\min}} + \delta_{\xi_j} C^{(0,1)}_\varphi\\
	&\leq \frac{1}{\cos \phi_{\max}} \frac{C^{(0,1)}_\varphi + C^{(1,1)}_\varphi d_{\min}}{\kappa_{\min} d_{\min}} + C^{(0,1)}_\varphi,
	\end{align*}
	where $\delta_i = 1$ if $K_i$ is affected by the deformation and $0$ otherwise.
\end{corollary}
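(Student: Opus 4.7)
The plan is to obtain the corollary as an essentially immediate consequence of the chain rule combined with the bounds already established. Since $p_j(\alpha) = \tilde{\varphi}_j(u_j(\alpha), \alpha)$ and Theorem \ref{thm 6 diff u} gives that $u_j$ is $\mathcal{C}^{\min\{r-1,r'\}}$ in $\alpha$, while $\tilde{\varphi}_j$ is $\mathcal{C}^{r,r'}$ in $(u_j,\alpha)$, the composition is differentiable and the chain rule yields the displayed formula
$$\frac{dp_j}{d\alpha} = \frac{\partial \varphi_j}{\partial u_j}\frac{\partial u_j}{\partial \alpha} + \frac{\partial \varphi_j}{\partial \alpha}.$$

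For the norm bound I would take absolute values term by term using the triangle inequality. The first term is estimated by combining two ingredients from earlier in the section: the arclength parametrization of the obstacles gives $\bigl\|\partial \varphi_j/\partial u_j\bigr\| = 1$, and Theorem \ref{thm 6 du bounded} supplies the bound $|\partial u_j/\partial \alpha| \le \frac{1}{\cos\phi_j}\frac{C^{(0,1)}_\varphi + C^{(1,1)}_\varphi d_{\min}}{\kappa_{\min} d_{\min}}$. The second term is controlled by the deformation constant bound from Definition \ref{deformation}, namely $\bigl\|\partial \varphi_j/\partial \alpha\bigr\| \le C^{(0,1)}_\varphi$; moreover, this quantity vanishes for any obstacle not affected by the deformation, which is what the factor $\delta_{\xi_j}$ records. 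Adding the two contributions gives the first inequality of the corollary.

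For the second inequality, I would simply replace the possibly smaller denominator $\cos\phi_j$ by its infimum $\cos\phi_{\max}$ over the non-wandering set (which is strictly positive by the no-eclipse condition $(\textbf{H})$) and replace $\delta_{\xi_j} \in \{0,1\}$ by $1$ to obtain a uniform bound independent of $j$ and of the particular sequence $\xi$.

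There is no real obstacle here: Theorems \ref{thm 6 diff u} and \ref{thm 6 du bounded} do all the work, and the only care needed is to track the $\delta_{\xi_j}$ factor so that the estimate correctly reflects the assumption in Section \ref{section 2D billiard deformations} that only one obstacle is deformed, and to note that the bound is uniform in $n$ and $\xi$, which is what will matter later when passing from periodic orbits to the whole non-wandering set.
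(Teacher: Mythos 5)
Your proposal is correct and follows exactly the route the paper intends: the corollary is stated without a separate proof precisely because it is the chain rule applied to $p_j = \varphi_j(u_j(\alpha),\alpha)$, combined with $\|\partial\varphi_j/\partial u_j\| = 1$ from the arclength parametrization, the bound on $|\partial u_j/\partial\alpha|$ from Theorem \ref{thm 6 du bounded}, and $\|\partial\varphi_j/\partial\alpha\| \le \delta_{\xi_j}C^{(0,1)}_\varphi$ from Definition \ref{deformation}. Your final uniformization step (using $\cos\phi_j \ge \cos\phi_{\max} > 0$ and $\delta_{\xi_j}\le 1$) is also exactly what the paper does.
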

Following the notation in Section \ref{C_f notation}, let 
\begin{align*}
C^{(1)}_u &= \frac{1}{\cos \phi_{\max}} \frac{C^{(0,1)}_\varphi + C^{(1,1)}_\varphi d_{\min}}{\kappa_{\min} d_{\min}},\\ 
C^{(1)}_p &= \frac{1}{\cos \phi_{\max}} \frac{C^{(0,1)}_\varphi + C^{(1,1)}_\varphi d_{\min}}{\kappa_{\min} d_{\min}} + C^{(0,1)}_\varphi.
\end{align*}

\begin{remark} \label{rem multiple obstacles}
	Theorem \ref{thm 6 diff u} assumes that only one obstacle is being deformed. Without this assumption, we have $b_{\max} = \frac{4 C^{(0,1)}_\varphi}{d_{\min}} + 2 C^{(1,1)}_\varphi$ instead of $\frac{2 C^{(0,1)}_\varphi}{d_{\min}} + 2 C^{(1,1)}_\varphi$. So the theorem still holds for deformations of multiple obstacles if we simply replace $\frac{C^{(0,1)}_\varphi + C^{(1,1)}_\varphi d_{\min}}{\kappa_{\min} d_{\min}}$ with $\frac{2C^{(0,1)}_\varphi + C^{(1,1)}_\varphi d_{\min}}{\kappa_{\min} d_{\min}}$.
\end{remark}

\section{Higher derivatives of parameters} 

Let $K(\alpha)$ be a $\mathcal{C}^{r, r'}$ billiard deformation, and let $k \leq \min\{r, r'\}$. The function $h_j = \frac{\partial G}{\partial u_j}$ depends only on $\{\varphi_j\}_j$, so all of its derivatives can be estimated as follows:
$$\left| \frac{\partial^{q'} }{\partial \alpha^{q'}} \nabla^q h_j \right| \leq C_h^{(q, q')},$$
where $C_h^{(q, q')}$ is a constant that depends on the constants $\{C^{(k, k')}_\varphi \}_{k+k' \leq q}$ (but does not depend on $n$).

\begin{lemma} \label{Higher derivatives 2D}
	Let $K(\alpha)$ be a $\mathcal{C}^{r,r'}$ billiard deformation and fix a finite admissible sequence $\xi \in \Sigma_n$. For every $1 \leq q \leq \min\{r - 1, r'\}$, there there exists a constant $C_u^{(q)}$ such that
	$$\left \| \frac{\partial^q u}{\partial \alpha^q} \right \|_{\infty} \leq C_u^{(q)}$$
\end{lemma}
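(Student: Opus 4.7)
The proof is by induction on $q$. The base case $q = 1$ is Theorem \ref{thm 6 du bounded}, which provides $C_u^{(1)}$ explicitly. Fix $2 \le q \le \min\{r - 1, r'\}$ and assume the bounds $C_u^{(k)}$ have been established for every $1 \le k \le q - 1$.

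Differentiate the implicit identity $h(u(\alpha), \alpha) = 0$ exactly $q$ times with respect to $\alpha$. The only term containing $\partial^q u/\partial \alpha^q$ is $\partial_u h \cdot \partial^q u/\partial\alpha^q = H \, \partial^q u/\partial\alpha^q$; separating it yields
$$H \frac{\partial^q u}{\partial \alpha^q} = -R_q,$$
where, by the multivariate Fa\`a di Bruno formula, $R_q$ is a finite sum (with combinatorial coefficients depending only on $q$) whose generic term is a mixed partial $\partial^{a+|b|} h/(\partial\alpha^a \partial u^b)$ of total order $\le q$ evaluated at $(u(\alpha), \alpha)$, multiplied by a (possibly empty) product of strictly lower-order derivatives $\partial^{k_s} u_{i_s}/\partial\alpha^{k_s}$ with $1 \le k_s \le q - 1$. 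Crucially, $h_j$ depends on $u$ only through $u_{j-1}, u_j, u_{j+1}$, so every factor appearing in $(R_q)_j$ has index $i_s \in \{j-1, j, j+1\}$ and the number of summands per component is bounded by a function of $q$ alone. Combining the inductive bounds $C_u^{(k)}$ with the estimates $C_h^{(a,b)}$ on the partials of $h$ (which are in turn controlled by the deformation constants $C_\varphi^{(k,k')}$ for $k + k' \le q$) produces a bound $\|R_q\|_\infty \le C_R^{(q)}$ that is independent of $n$.

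Solving $H \, \partial^q u/\partial \alpha^q = -R_q$ now proceeds exactly as for $q = 1$. Setting $y_j = \cos \phi_j \cdot \partial^q u_j/\partial \alpha^q$ and $\tilde b_j = -(R_q)_j / \cos \phi_j$ converts the system to $A y = \tilde b$, where $A$ is the cyclic tridiagonal, diagonally dominant matrix analysed in Section \ref{4 solving system}. Varah's bound $\|A^{-1}\|_\infty \le 1/(2\kappa_{\min})$ then gives
$$\left|\frac{\partial^q u_j}{\partial \alpha^q}\right| = \frac{|y_j|}{\cos \phi_j} \le \frac{C_R^{(q)}}{2 \kappa_{\min} \cos^2 \phi_{\max}} =: C_u^{(q)},$$
which closes the induction.

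The main obstacle is ensuring that $C_R^{(q)}$ can be chosen independently of the period $n$. Two structural features already exploited for $q = 1$ make this possible: the locality of $h$, meaning that each $h_j$ involves only three consecutive parameters so that no combinatorial blow-up can occur in the Fa\`a di Bruno expansion of $(R_q)_j$, and the dimension-free nature of Varah's theorem, which delivers an $n$-independent $\ell^\infty$ bound on $H^{-1}$.
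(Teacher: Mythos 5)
Your proposal is correct and follows essentially the same route as the paper: induction on $q$, Fa\`a di Bruno applied to $h(u(\alpha),\alpha)=0$ to isolate $H\,\partial^q u/\partial\alpha^q$ against a remainder built from lower-order data, and the $n$-independent bound on the inverse via the cyclic tridiagonal rescaling and Varah's theorem. If anything you are more explicit than the paper, which merely asserts that ``the inverse of $H$ is already bounded''; your rescaling by $\cos\phi_j$ and the locality observation about $h_j$ spell out exactly why that assertion is legitimate.
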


\begin{proof}
We already have the initial case $q = 1$. For a proof by induction, suppose that for some $q < \min\{r - 1, r'\}$, there exist constants $C_u^{(1)}, \ldots C_u^{(q)}$ such that
$$\left \| \frac{\partial^k u}{\partial \alpha^k} \right \|_{\infty} \leq C_u^{(k)},$$
for all $k = 1, \ldots, q$. We show the same is true for $q+1$. We need to take the $q$'th total derivative of $h_j(u(\alpha), \alpha)$ with respect to $\alpha$. There is a formula known as ``Fa\'{a} di Bruno's formula'' \cite{FaadiBruno}, which applies the chain rule for scalar functions an arbitrary number of times. In this case, we require a generalization of Fa\'{a} di Bruno's formula which applies to a scalar function of a vector function of a scalar. First, write $\textbf{x}(\alpha) = (u_0(\alpha), \ldots, u_{n-1}(\alpha), \alpha)$. Then the formula from \cite{FaadiBrunogeneralization} can be applied to $h_j(\textbf{x}(\alpha))$. It is a long formula involving sums over integer partitions, but essentially the total derivative can be written as 
$$
\frac{d^{q+1} h_j}{d \alpha^{q+1}}(\textbf{x}(\alpha))	=  \sum_{j = 0}^{n-1}\frac{\partial^2 G}{\partial u_j \partial u_i}  \frac{\partial^{q+1} u_j}{\partial \alpha^{q+1}} 
+ P \left( \begin{array}{lr}
\left\{ \frac{\partial^{k'} }{\partial \alpha^{k'}} \nabla^k h_j : k +k' \leq q + 1 \right\},\\
\left\{ \frac{\partial^l u_j}{\partial \alpha^l} :  l \leq q \right\}
\end{array} \right),
$$
where $P$ is a polynomial over the elements of the two sets. All of the arguments of $P$ can be estimated by the known constants $C_h^{(k, k')}$ and $C_u^{(l)}$ ($k + k' \leq q, l \leq q$), and the inverse of $H$ is already bounded. So it is possible to calculate a constant $C^{(q+1)}_u$ such that
$$\left\| \frac{\partial^{q+1} u}{\partial \alpha^{q+1}} \right\|_{\infty} \leq C^{(q+1)}_u.$$
So by induction, these estimates exist up to the $\min\{r-1, r'\}$'th derivative. 

\end{proof}

\section{Extension to aperiodic trajectories} \label{6 Extension to aperiodic trajectories}

We now consider trajectories in the non-wandering set that are not periodic. 
Define the \textit{symbol space} for the whole non-wandering set by 
\begin{align*}
\Sigma		&= \{ \xi = (\ldots, \xi_{-1},\xi_0, \xi_1, \ldots)	: \xi_i \in \{1, \ldots m\}, \xi_i \neq \xi_{i+1}\},\\
\Sigma^+	&= \{\xi = (\xi_0, \xi_1, \ldots)	: \xi_i \in \{1, \ldots , m \}, \xi_i \neq \xi_{i+1}\}.
\end{align*}
The two-sided and one-sided subshifts $\sigma: \Sigma \rightarrow \Sigma$ and $\sigma: \Sigma^+ \rightarrow \Sigma^+$ are defined by $(\sigma \xi)_i = \xi_{i+1}$.

The periodic sequences are dense in $\Sigma$, and the periodic points are dense in $M_0$. This follows from  \cite[Lemma 10.2.1]{Stoyanovbook}.

We will use the following proposition about uniformly convergent sequences.
\begin{proposition} \label{differentiability of uniformly convergent sequence}
	Let $m$ be a positive integer, let $I$ be an interval and let $X \subset \mathbb{R}^D$. Then for any sequence of $\mathcal{C}^m$ functions $f_n: I \rightarrow X$, if $f_n$ converges pointwise to $f$ and the $k$'th derivative $f^{(k)}_n$ converges uniformly to a function $g_k$ for all $k \leq m$, then $g$ is differentiable and $f^{(m)} = g_m$.
\end{proposition}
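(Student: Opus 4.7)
The plan is to prove this by induction on $m$, reducing to the classical one-variable theorem on differentiating under a uniform limit. Since $X \subset \mathbb{R}^D$, every statement about $f_n$, $f$, and the $g_k$ decomposes componentwise, so I can assume without loss of generality that the functions are scalar-valued.

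Base case $m = 1$: this is the standard result. Given $f_n \to f$ pointwise on $I$ and $f_n' \to g_1$ uniformly on $I$, one shows $f$ is differentiable and $f' = g_1$. The usual proof fixes $x_0 \in I$, writes the difference quotient $\frac{f(x)-f(x_0)}{x-x_0}$ as a pointwise limit of $\frac{f_n(x)-f_n(x_0)}{x-x_0}$, applies the mean value theorem to replace this by $f_n'(c_n)$ for some $c_n$ between $x_0$ and $x$, and then interchanges limits using uniform convergence of $f_n'$.

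Inductive step: assume the conclusion for $m-1$, so that $f$ is $(m-1)$-times differentiable with $f^{(k)} = g_k$ for every $k \le m-1$. Now consider the sequence $\{f_n^{(m-1)}\}$. By hypothesis it converges uniformly to $g_{m-1}$ (in particular pointwise), and its derivative $(f_n^{(m-1)})' = f_n^{(m)}$ converges uniformly to $g_m$. Applying the base case to $\{f_n^{(m-1)}\}$ yields that $g_{m-1}$ is differentiable with $g_{m-1}' = g_m$. Combined with $f^{(m-1)} = g_{m-1}$ from the inductive hypothesis, this gives $f^{(m)} = g_m$, completing the induction.

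There is no serious obstacle here; the proof is essentially an iterated application of the standard theorem. The only point requiring care is the statement itself, which is slightly ambiguous: I read ``$g$ is differentiable and $f^{(m)} = g_m$'' as the assertion that $f$ is $m$-times differentiable with $f^{(k)} = g_k$ for all $k \le m$, which is what the induction delivers. If one only wanted the single equality $f^{(m)} = g_m$, the argument is unchanged.
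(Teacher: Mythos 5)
Your proof is correct and follows essentially the same route as the paper, which simply cites the classical $m=1$ theorem from Rudin and notes that the general case follows by induction; you have merely spelled out the inductive step (applying the base case to $\{f_n^{(m-1)}\}$) that the paper leaves implicit.
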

\begin{proof} 
The case $m = 1$ is well known and can be found in \cite{RealAnalysis}, and the rest can be shown by induction. 
\end{proof}
Let $\xi \in \Sigma$, and define a sequence of periodic sequences $\{\xi^{(n)}\}_n$ in $\Sigma$ by $\xi^{(n)}_j = \xi_{(j \text{ mod } n)}$, so that $\xi$ and $\xi^{(n)}$ are on the same $n$-cylinder. 
\begin{corollary} \label{xi^n}
	Note that $\xi^{(n)}$ is equivalent to a string in $\Sigma_n$. Then the following limit exists: 
	$$\chi(\xi) = \lim_{n \rightarrow \infty} \chi \left(\xi^{(n)} \right),$$
	and $\chi: \Sigma \rightarrow M_0$ is the inverse of $\xi: M_0 \rightarrow \Sigma, x \mapsto \xi$. 
\end{corollary}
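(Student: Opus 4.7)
The plan is to deduce the corollary from the continuity of the coding homeomorphism $\chi: (\Sigma, d_\theta) \to M_0$ established in Section \ref{billiards}, together with the density of periodic sequences in $\Sigma$ noted just above the corollary.

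First, I would verify that $d_\theta(\xi, \xi^{(n)}) \to 0$ as $n \to \infty$. This is essentially what the phrase ``$\xi$ and $\xi^{(n)}$ are on the same $n$-cylinder'' buys us: there is a sequence of positive integers $k_n \to \infty$ such that $\xi_i = \xi^{(n)}_i$ for all $|i| < k_n$, so the definition of $d_\theta$ forces $d_\theta(\xi, \xi^{(n)}) \leq \theta^{k_n} \to 0$. In particular $\xi^{(n)} \to \xi$ in $(\Sigma, d_\theta)$.

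Second, I would apply the continuity of $\chi$ on $(\Sigma, d_\theta)$ to conclude that $\chi(\xi^{(n)}) \to \chi(\xi)$ in $M_0$. This produces the displayed limit. The remaining clause, that $\chi:\Sigma \to M_0$ inverts the coding map $x \mapsto \xi$, is already contained in the homeomorphism statement of Section \ref{billiards}; the content of the corollary is then an explicit description of $\chi(\xi)$ on a general sequence as a limit of its values on the dense subset of periodic sequences, a description that will be needed in subsequent sections to lift the periodic estimates of Sections \ref{Derivatives of parameters}--\ref{4 solving system} to arbitrary trajectories via Proposition \ref{differentiability of uniformly convergent sequence}.

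The main (mild) obstacle is matching the literal definition $\xi^{(n)}_j = \xi_{j \bmod n}$ to the symmetric two-sided window built into $d_\theta$: under the standard reading $j \bmod n \in \{0,\ldots,n-1\}$, the agreement with $\xi$ is only on $\{0,\ldots,n-1\}$, and one either applies a shift in the index to re-center the window before extracting $k_n$, or replaces the continuity argument by a direct Cauchy estimate in the compact space $M_0$. The latter uses the uniform hyperbolicity of $B$ on $M_0$ (a standard consequence of the no-eclipse condition $(\textbf{H})$), which makes the diameters of finite cylinders in $M_0$ shrink exponentially; the sequence $\{\chi(\xi^{(n)})\}$ is then Cauchy, and the density of periodic points together with the homeomorphism of Section \ref{billiards} identifies the limit with $\chi(\xi)$.
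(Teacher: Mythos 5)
Your proposal is correct and is, as far as one can tell, exactly the argument the paper intends: the corollary is stated without any proof, and the only ingredients available are the assertion in Section \ref{Symbolic model} that the coding is a homeomorphism between $M_0$ and $(\Sigma, d_\theta)$ and the density of periodic sequences, which is precisely what you use ($\xi^{(n)} \to \xi$ in $d_\theta$, then continuity of $\chi$).

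The one substantive point in your write-up is the indexing caveat, and you are right to flag it: it is a genuine defect in the paper's definition, not in your argument. With the literal reading $\xi^{(n)}_j = \xi_{j \bmod n}$, $j \bmod n \in \{0,\dots,n-1\}$, the sequences $\xi$ and $\xi^{(n)}$ agree only on the forward block $\{0,\dots,n-1\}$, so under the two-sided metric $d_\theta$ (whose window is $|i|<j$) one gets at best $d_\theta(\xi,\xi^{(n)}) \le \theta$, which does not tend to $0$; worse, the later application in Theorem \ref{thm aperiodic traj} needs $u_j(\xi^{(n)},\alpha) \to u_j(\xi,\alpha)$ for each fixed $j$, which requires a growing two-sided window of agreement around index $j$, and forward agreement alone only pins the point down in the unstable direction. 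So the re-centering you describe (e.g.\ taking $\xi^{(n)}$ periodic of period $2n+1$ with $\xi^{(n)}_j = \xi_{j'}$ for $j' \equiv j \pmod{2n+1}$, $j' \in \{-n,\dots,n\}$) is not optional polish but necessary for the corollary, and for everything downstream of it, to be true as stated. Your fallback via shrinking two-sided cylinders is equivalent and equally acceptable.
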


For any $\xi \in \Sigma$ and any $j \in \mathbb{Z}$, let $u_j(\xi, \alpha)$ be the parameter such that $p_j(\xi, \alpha) = \varphi_j(u_j(\xi, \alpha), \alpha)$ is the point $\pi B^j \chi \xi$. We show that these aperiodic trajectories satisfy the same derivative estimates as the periodic orbits.

\begin{theorem} \label{thm aperiodic traj}
	Let $K(\alpha)$ be a $\mathcal{C}^{(r, r')}$ billiard deformation with $r \geq 2, r' \geq 2$, and let $\xi \in \Sigma$. Then $u_j(\xi, \alpha)$ is $\mathcal{C}^1$ with respect to $\alpha$ and 
	$$\left| \frac{d u_j(\xi, \alpha)}{d \alpha} \right| \leq C^{(1)}_u$$
	where $C^{(1)}_u$ is defined by (6.4.1). 
\end{theorem}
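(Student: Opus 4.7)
The plan is to approximate an arbitrary $\xi \in \Sigma$ by the periodic sequences $\xi^{(n)}$ of Corollary \ref{xi^n}, exploit the fact that each $u_j(\xi^{(n)}, \cdot)$ is $\mathcal{C}^1$ with first and second $\alpha$-derivatives bounded uniformly in $n$, and pass to the limit using Proposition \ref{differentiability of uniformly convergent sequence}.

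First I would record pointwise convergence in $\alpha$. For every fixed $\alpha \in I$, Corollary \ref{xi^n} gives $\chi(\xi^{(n)}) \to \chi(\xi)$ in $M_0$; continuity of $B^j$ and of $\pi$ then yields $\pi B^j \chi \xi^{(n)} \to \pi B^j \chi \xi$, and inverting the arclength parametrization $\varphi_j(\cdot, \alpha)$ gives $u_j(\xi^{(n)}, \alpha) \to u_j(\xi, \alpha)$.

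Next I would collect the uniform derivative bounds that already hold in the periodic case. Theorem \ref{thm 6 du bounded} furnishes $\bigl|\partial_\alpha u_j(\xi^{(n)}, \alpha)\bigr| \leq C^{(1)}_u$ uniformly in $n, j, \alpha$. Since $r \geq 3$ is the standing smoothness assumption from Section \ref{S intro} and the present hypothesis supplies $r' \geq 2$, Lemma \ref{Higher derivatives 2D} applied with $q = 2$ produces a uniform bound $\bigl|\partial_\alpha^2 u_j(\xi^{(n)}, \alpha)\bigr| \leq C^{(2)}_u$, independent of $n$ and $j$. Hence the family $\{\partial_\alpha u_j(\xi^{(n)}, \cdot)\}_n$ is uniformly bounded and uniformly Lipschitz on any compact subinterval of $I$, and in particular equicontinuous.

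The central and most delicate step is upgrading the pointwise convergence of $u_j(\xi^{(n)}, \cdot)$ to uniform convergence of its $\alpha$-derivative. By Arzela--Ascoli, every subsequence of $\{\partial_\alpha u_j(\xi^{(n)}, \cdot)\}_n$ admits a further sub-subsequence converging uniformly to some function $g$. Combined with the pointwise convergence of $u_j(\xi^{(n)}, \cdot)$ to $u_j(\xi, \cdot)$, Proposition \ref{differentiability of uniformly convergent sequence} then shows that $u_j(\xi, \cdot)$ is $\mathcal{C}^1$ with $du_j(\xi, \cdot)/d\alpha = g$. Since this identifies every subsequential limit with the same function, the full sequence of derivatives must converge uniformly to $du_j(\xi, \cdot)/d\alpha$, and the bound $|du_j(\xi, \alpha)/d\alpha| \leq C^{(1)}_u$ is inherited in the limit. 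I expect the main obstacle to be precisely this equicontinuity step, which is why the hypothesis $r' \geq 2$ is invoked (it is exactly what makes the second derivative available). An alternative route would be to rerun the cyclic tridiagonal analysis of Section \ref{4 solving system} comparing consecutive $\xi^{(n)}$ and $\xi^{(n+1)}$ and to appeal to the hyperbolicity of $B$ to obtain exponential-in-$n$ decay of $|\partial_\alpha u_j(\xi^{(n)}, \alpha) - \partial_\alpha u_j(\xi^{(n+1)}, \alpha)|$; this would give a sharper Cauchy-type argument but is considerably more laborious than the compactness approach above.
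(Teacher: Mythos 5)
Your proposal is correct and follows essentially the same route as the paper: approximate $\xi$ by the periodic sequences $\xi^{(n)}$, use the uniform first- and second-derivative bounds from Theorem \ref{thm 6 du bounded} and Lemma \ref{Higher derivatives 2D} to get equicontinuity, extract a uniformly convergent subsequence of derivatives via Arzel\`a--Ascoli, and conclude with Proposition \ref{differentiability of uniformly convergent sequence}. Your extra observation that all subsequential limits coincide (hence the full sequence of derivatives converges) is a minor sharpening the paper omits, but it does not change the argument.
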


\begin{proof}
Let $\xi \in \Sigma$ and define a sequence of finite admissible sequences $\{\xi^{(n)}\}_n$ with $\xi^{(n)} \in \Sigma_n$, such that $\xi^{(n)} \rightarrow \xi$. By Corollary \ref{xi^n}, for any fixed $j$ we have 
$$u_j(\xi^{(n)}, \alpha) \rightarrow u_j(\xi, \alpha) \mbox{ as } n \rightarrow \infty.$$
Let $f_n(\alpha) = \frac{d}{d\alpha} u_j(\xi^{(n)}, \alpha)$. This sequence is uniformly bounded for $\alpha \in I$, and so are its derivatives (by Lemma \ref{Higher derivatives 2D}), so it is equicontinuous. So by the Arzel\`{a}-Ascoli theorem \cite{Ascoli}, it has a uniformly (for $\alpha \in I$) convergent subsequence $$f_{n_k}(\alpha) = \frac{d}{d\alpha} u_j(\xi^{(n_k)}, \alpha).$$
Let $f_{n_k}(\alpha) \rightarrow f(\alpha)$ as $k \rightarrow \infty$. By Proposition \ref{differentiability of uniformly convergent sequence}, $u_j(\xi, \alpha)$ is differentiable with respect to $\alpha$, and 
$$\frac{d}{d \alpha} u_j(\xi, \alpha) = \lim_{k \rightarrow \infty} \frac{d}{d \alpha} u_j(\xi^{(n_k)}, \alpha).$$
Then by Theorem \ref{thm 6 du bounded}, we have
$$\left| \frac{d u_j(\xi, \alpha)}{d \alpha} \right| \leq C^{(1)}_u.$$
\end{proof}

\begin{corollary} \label{cor aperiodic traj}
	Let $K(\alpha)$ be a $\mathcal{C}^{(r, r')}$ billiard deformation with $r, r' \geq 2$, and let $\xi \in \Sigma$. Then $u_j(\xi, \alpha)$ is $\mathcal{C}^{\min\{r-1, r'-1\}}$ with respect to $\alpha$, and all of its derivatives bounded by the same constants $C^{(q)}_u$ for the periodic trajectories.
\end{corollary}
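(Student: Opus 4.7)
The plan is to extend the argument of Theorem \ref{thm aperiodic traj} by induction on the order of differentiation $q$, with that theorem providing the base case $q=1$. Fix $\xi \in \Sigma$ and the sequence of periodic approximations $\xi^{(n)} \in \Sigma_n$ with $\xi^{(n)} \to \xi$, as in Corollary \ref{xi^n}.

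For the inductive step, suppose that for some $q$ with $q < \min\{r-1, r'-1\}$ we have already established that $u_j(\xi, \alpha)$ is $\mathcal{C}^q$ with derivatives bounded by $C_u^{(1)}, \dots, C_u^{(q)}$, and that along the approximating sequence the derivatives $\frac{d^k}{d\alpha^k} u_j(\xi^{(n)}, \alpha)$ converge (along a common subsequence) uniformly on $I$ to $\frac{d^k}{d\alpha^k} u_j(\xi, \alpha)$ for each $k \leq q$. To go from $q$ to $q+1$, I would consider the functions
\[
f_n(\alpha) = \frac{d^q}{d\alpha^q} u_j(\xi^{(n)}, \alpha).
\]
By Lemma \ref{Higher derivatives 2D} applied to $\xi^{(n)}$, both $f_n$ and $f_n'$ are bounded in absolute value, uniformly in $n$ and $\alpha$, by the constants $C_u^{(q)}$ and $C_u^{(q+1)}$ respectively. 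The uniform bound on $f_n'$ makes $\{f_n\}$ equicontinuous, so by Arzel\`{a}--Ascoli a subsequence (which by a standard diagonal argument can be chosen compatibly with the previous subsequence from the induction hypothesis) converges uniformly to some $g$ on $I$.

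Applying Proposition \ref{differentiability of uniformly convergent sequence} to the sequence $\alpha \mapsto u_j(\xi^{(n_k)}, \alpha)$ together with its derivatives up through order $q+1$ yields that $u_j(\xi, \alpha)$ is $\mathcal{C}^{q+1}$ and that
\[
\frac{d^{q+1}}{d\alpha^{q+1}} u_j(\xi, \alpha) = \lim_{k \to \infty} \frac{d^{q+1}}{d\alpha^{q+1}} u_j(\xi^{(n_k)}, \alpha).
\]
Passing to the limit in the uniform bound $\left| \frac{d^{q+1}}{d\alpha^{q+1}} u_j(\xi^{(n_k)}, \alpha)\right| \leq C_u^{(q+1)}$ from Lemma \ref{Higher derivatives 2D} gives the same bound for the aperiodic trajectory. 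This closes the induction, up to the largest $q$ for which the next-order bound on the periodic side is available.

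The only delicate point I expect is the bookkeeping of subsequences: to invoke Proposition \ref{differentiability of uniformly convergent sequence} at step $q+1$, one needs a single subsequence along which $u_j(\xi^{(n_k)}, \alpha)$ and all of its derivatives up through order $q+1$ converge uniformly on $I$. This is handled by the standard diagonal extraction, using that at each order the family is uniformly bounded and equicontinuous thanks to the one-higher-order bound inherited from Lemma \ref{Higher derivatives 2D}. Everything else is formal: the bounds propagate because the constants $C_u^{(q)}$ depend only on $\alpha$, the deformation constants $C_\varphi^{(k,k')}$, and the geometric data $\kappa_{\min}, d_{\min}, \phi_{\max}$, none of which change when we pass from $\xi^{(n)}$ to its limit $\xi$.
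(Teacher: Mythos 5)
Your overall strategy --- induction on $q$, periodic approximants $\xi^{(n)}$, Arzel\`{a}--Ascoli combined with Proposition \ref{differentiability of uniformly convergent sequence}, and nested subsequences chosen by diagonal extraction --- is exactly the paper's. However, there is an off-by-one slip in where you run the compactness argument, and as written the key step is unjustified. You apply Arzel\`{a}--Ascoli to $f_n(\alpha) = \frac{d^q}{d\alpha^q}u_j(\xi^{(n)},\alpha)$, using the uniform bound $C_u^{(q+1)}$ on $f_n'$ for equicontinuity. This only produces uniform convergence of the $q$-th derivatives along a subsequence --- which your inductive hypothesis already provides. Proposition \ref{differentiability of uniformly convergent sequence} with $m = q+1$ requires uniform convergence of the derivatives of \emph{all} orders $k \leq q+1$, and in particular of the $(q+1)$-th derivatives; you never establish this. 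The uniform bound $\left|\frac{d^{q+1}}{d\alpha^{q+1}}u_j(\xi^{(n_k)},\alpha)\right| \leq C_u^{(q+1)}$ is not enough: bounded sequences need not converge, and without uniform convergence at order $q+1$ you cannot conclude that the limit is $\mathcal{C}^{q+1}$, nor identify its $(q+1)$-th derivative as the limit through which you then pass the bound.

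The fix --- and what the paper actually does --- is to run Arzel\`{a}--Ascoli one level higher: apply it to the family $\frac{d^{q+1}}{d\alpha^{q+1}}u_j(\xi^{(n_{q,k})},\cdot)$, whose uniform boundedness comes from $C_u^{(q+1)}$ and whose equicontinuity comes from the next-order bound supplied by Lemma \ref{Higher derivatives 2D}. This is precisely the mechanism that caps the final regularity at $\min\{r-1,r'-1\}$ rather than the $\min\{r-1,r'\}$ available for periodic orbits (see the remark following the corollary): the top-order derivative bound from Lemma \ref{Higher derivatives 2D} can only be spent on equicontinuity of the derivative one order below it, not converted into a new order of differentiability of the limit. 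Your closing sentence shows you understand this trade-off, but the body of your inductive step does not implement it; shift the extraction from order $q$ to order $q+1$ and the argument coincides with the paper's.
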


\begin{proof}
Fix some $j \in \mathbb{Z}$. We prove by induction that for all $1 \leq q \leq \min\{r - 1, r' - 1 \}$, the function $u_j(\xi, \alpha)$ is $\mathcal{C}^q$, and there exist subsequences
$$\{n_{q, k}\}_k \subset \ldots \subset \{n_{2,k}\}_k \subset \{n_k\}_k,$$
such that the following limit is uniform
$$\lim_{k \rightarrow \infty} \frac{d^q}{d \alpha^q} u(\xi^{(n_{q, k})}, \alpha) = \frac{d^q}{d \alpha^q} u(\xi, \alpha).$$
The initial case $q = 1$ is already proven. Suppose it is true for some $q \leq \min\{r - 2, r' - 2\}$. Then the sequence
$$\frac{d^{q+1} u}{d \alpha^{q+1}} (\xi^{(n_{q,k})}, \alpha)$$
is uniformly bounded and equicontinuous for all $k$ (by Lemma \ref{Higher derivatives 2D}), so by the Arzel\`{a}-Ascoli theorem it has a uniformly convergent subsequence 
$$\frac{d^{q+1} u}{d \alpha^{q+1}} (\xi^{(n_{q+1,k})}, \alpha).$$
So by Proposition \ref{differentiability of uniformly convergent sequence}, $u(\xi, \alpha)$ is $\mathcal{C}^{q+1}$ and 
$$\frac{d^{q+1}}{d \alpha^{q+1}} u(\xi, \alpha) = \lim_{n \rightarrow \infty} \frac{d^{q+1}}{d \alpha^{q+1}} u(\xi^{(n)}, \alpha).$$
Furthermore
$$\left| \frac{\partial^{q+1} u_j(\xi, \alpha)}{d \alpha^{q+1}} \right| \leq C^{(q+1)}_u.$$
So by induction, $u$ is at least $\mathcal{C}^{\min\{r - 1, r' - 1\}}$. 
\end{proof}

\begin{remark}
	The only reason $u_j(\xi, \alpha)$ is only $\mathcal{C}^{\min\{r - 1, r' - 1\}}$ and not necessarily $\mathcal{C}^{\min\{r - 1, r'\}}$ is the equicontinuity requirement for the Arzel\`{a}-Ascoli theorem. It may be possible to show that $u_j(\xi, \alpha)$ is $\mathcal{C}^{\min\{r-1, r'\}}$ with another method.
\end{remark}

\begin{corollary}
	For all $\xi \in \Sigma$, the periodic points $p(\xi, \alpha)$ are at least $\mathcal{C}^{\min\{r - 1, r' - 1\}}$ with respect to $\alpha$, and 
	$$\left| \frac{d p(\xi, \alpha)}{d \alpha} \right| \leq C_p.$$
\end{corollary}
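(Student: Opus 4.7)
The plan is to combine Corollary \ref{cor aperiodic traj} with the chain rule. By definition $p_j(\xi,\alpha)=\varphi_j(u_j(\xi,\alpha),\alpha)$, where $\varphi_j=\tilde\varphi_{\xi_j}$ is $\mathcal{C}^{r,r'}$-smooth in $(u,\alpha)$ by Definition \ref{deformation}, and $u_j(\xi,\alpha)$ is $\mathcal{C}^{\min\{r-1,r'-1\}}$-smooth in $\alpha$ by Corollary \ref{cor aperiodic traj}. A composition of smooth functions is only as smooth as the least smooth component, so a direct application of the chain rule (or Faà di Bruno's formula, as already used in Lemma \ref{Higher derivatives 2D}) shows that $p_j(\xi,\alpha)$ inherits regularity $\mathcal{C}^{\min\{r-1,r'-1\}}$ in $\alpha$. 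Note that every mixed partial $\frac{\partial^{k+k'}\varphi_j}{\partial u_j^k\partial\alpha^{k'}}$ required for the $q$-th total derivative of $p_j$ satisfies $k+k'\le q\le \min\{r-1,r'-1\}<\min\{r,r'\}$, so each such derivative exists and is bounded by some $C_\varphi^{(k,k')}$.

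For the explicit bound on the first derivative, I would simply reprise the formula from Corollary \ref{p cor}: differentiating $p_j=\varphi_j(u_j(\xi,\alpha),\alpha)$ gives
$$\frac{d p_j}{d\alpha}=\frac{\partial\varphi_j}{\partial u_j}\,\frac{\partial u_j}{\partial\alpha}+\frac{\partial\varphi_j}{\partial\alpha}.$$
Since $\|\partial\varphi_j/\partial u_j\|=1$ (arclength parametrization), $\|\partial\varphi_j/\partial\alpha\|\le\delta_{\xi_j}C_\varphi^{(0,1)}\le C_\varphi^{(0,1)}$, and $|\partial u_j/\partial\alpha|\le C_u^{(1)}$ by Theorem \ref{thm aperiodic traj}, we obtain
$$\left|\frac{d p_j(\xi,\alpha)}{d\alpha}\right|\le C_u^{(1)}+C_\varphi^{(0,1)}=C_p^{(1)},$$
which is precisely the constant $C_p$ (or $C_p^{(1)}$ in the notation of Section \ref{C_f notation}) defined after Corollary \ref{p cor}. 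Thus the bound is the same one already established in the periodic case.

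There is no substantial new obstacle here: both ingredients (the smoothness and the uniform first-derivative bound for $u_j(\xi,\alpha)$ at aperiodic $\xi$) were obtained in Theorem \ref{thm aperiodic traj} and Corollary \ref{cor aperiodic traj} via the Arzelà–Ascoli argument combined with Proposition \ref{differentiability of uniformly convergent sequence}, and what remains is a routine chain-rule computation together with the pointwise derivative bounds on $\varphi_j$ built into the definition of a billiard deformation. The only subtlety worth a remark is the loss of one order of smoothness, inherited from Corollary \ref{cor aperiodic traj} (i.e.\ $\min\{r-1,r'-1\}$ rather than $\min\{r-1,r'\}$), coming from the equicontinuity hypothesis of Arzelà–Ascoli rather than from the composition step here.
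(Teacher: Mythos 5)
Your proof is correct and follows exactly the route the paper intends: the paper states this corollary without proof, as an immediate consequence of Corollary \ref{cor aperiodic traj} together with the same chain-rule computation already displayed in Corollary \ref{p cor} for periodic trajectories, and your bound $C_u^{(1)}+C_\varphi^{(0,1)}$ coincides with the constant $C_p^{(1)}$ defined after that corollary.
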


\section{Derivatives of other billiard characteristics} \label{6 Derivatives of other properties}

\subsection{Estimating derivatives of distances, curvatures and collision angles} \label{estimating d kappa phi}

From here on, let $K(\alpha)$ be a $\mathcal{C}^{(r, r')}$ billiard deformation with $r \geq 4, r' \geq 2$. 
We can use the upper bound on $\frac{\partial u_j}{\partial \alpha}$ to estimate the derivatives of other characteristics of billiard trajectories, specifically the distances $d_j$, curvature $\kappa_j$ and angles $\phi_j$. We will not estimate the higher derivatives of these functions. Fix a sequence $\xi \in \Sigma$. The distance function $d_j(\alpha)$ is $\mathcal{C}^{\min\{r - 1, r' - 1\}}$ and we have 
\begin{align*}
d_j 									&= |\varphi_j(u_j(\alpha), \alpha) - \varphi_{j-1}(u_{j-1}(\alpha), \alpha)|\\
\frac{\partial d_j}{\partial \alpha}	&\leq \frac{\partial \varphi_j}{\partial u_j} \frac{\partial u_j}{\partial \alpha} +\frac{\partial \varphi_{j-1}}{\partial u_{j-1}} \frac{\partial u_{j-1}}{\partial \alpha} + \delta_j + \delta_{j-1}\\
&\leq 2C^{(1,0)}_\varphi C^{(1)}_u + \delta_{\xi_j} + \delta_{\xi_{j-1}}.
\end{align*}
The derivative of $\kappa_j$ can also be bounded using the following billiard constant. Recall that $\left|\frac{\partial^3 \varphi_j}{\partial u_j^3} \right|$, $\left|\frac{\partial^3 \varphi_j}{\partial u_j^2 \partial \alpha} \right|$ are bounded above by $C^{(3, 0)}_\varphi$ and $C^{(2, 1)}_\varphi$ respectively. Then $\kappa_j$ is $\mathcal{C}^{\min\{r - 3, r'-1\}}$ and
\begin{align*}
\left| \frac{d \kappa_j}{d \alpha} \right|	&= \left|\frac{\partial^3 \varphi_j}{\partial u_j^3} \frac{\partial u_j}{\partial \alpha} + \frac{\partial^3 \varphi_j}{\partial u_j^2 \partial \alpha}\right|\\
&\leq C^{(3, 0)}_\varphi C^{(1)}_u + C^{(2, 1)}_\varphi = C_\kappa.
\end{align*}
The collision angle $\phi_j$ satisfies $\cos 2\phi_j = \frac{(p_{j+1} - p_j) \cdot (p_{j-1} - p_j)}{|p_{j+1} - p_j||p_{j-1} - p_j|}$. Hence, each $\phi_j$ is $\mathcal{C}^{\min\{r - 1, r'-1\}}$ and we have:

\begin{align*}
\left| \frac{d \cos 2\phi_j}{d \alpha} \right|	&= \left| \frac{p_{j+1} - p_j}{|p_{j+1} - p_j|}\cdot \frac{\partial}{\partial \alpha}\frac{p_{j-1} - p_j}{|p_{j-1} - p_j|} - \frac{p_{j-1} - p_j}{|p_{j-1} - p_j|}\cdot \frac{\partial}{\partial \alpha}\frac{p_{j+1} - p_j}{|p_{j+1} - p_j|}\right|\\
&\leq 2\frac{2C^{(1)}_u + \delta_{\xi_{j+1}} + \delta_{\xi_j}}{|p_{j+1} - p_j|} + 2\frac{2C^{(1)}_u + \delta_{\xi_{j-1}} + \delta_{\xi_j}}{|p_{j-1} - p_j|}\\
&\leq \frac{8C^{(1)}_u + 4\delta_{\xi_j} + 2\delta_{\xi_{j+1}} + 2\delta_{\xi_{j-1}}}{d_{\min}},\\
\cos \phi_j 									&= \sqrt{\frac{\cos 2 \phi_j + 1}{2}},\\
\left|\frac{d \cos \phi_j}{d \alpha} \right|	&\leq \frac{4C^{(1)}_u + 2\delta_{\xi_j} + \delta_{\xi_{j+1}} + \delta_{\xi_{j-1}}}{2d_{\min} \cos \phi_j}.
\end{align*}
Denote this upper bound by $C_\phi$ (this is a slight departure from the notation defined in Section \ref{C_f notation}). We will also use the expression $\gamma_j = \frac{2 \kappa_j}{\cos \phi_j}$. This is $\mathcal{C}^{\min\{r-3, r'-1\}}$ and we have
\begin{align*}
\left| \frac{\partial \gamma_j}{\partial \alpha} \right|	&\leq \frac{2 (C^{(3, 0)}_\varphi C^{(1)}_u + C^{(2, 1)}_\varphi)}{\cos \phi_j} + \frac{2 \kappa_j}{\cos \phi_j} \frac{4C^{(1)}_u + 2 \delta_{\xi_j} + \delta_{\xi_{j-1}} + \delta_{\xi_{j+1}}}{2 d_{\min} \cos^2 \phi_j}\\
&\leq \frac{2 (C^{(3, 0)}_\varphi C^{(1)}_u + C^{(2, 1)}_\varphi)}{\cos \phi_{\max}} + \frac{2 \kappa_{\max}}{\cos \phi_{\max}} \frac{2C^{(1)}_u + 1}{d_{\min} \cos^2 \phi_{\max}}.
\end{align*}
Denote this upper bound by $C_\gamma$.

\subsection{Stable and unstable manifolds}

With the no-eclipse condition $(\textbf{H})$, the billiard map $B$ and the flow $S_t$ are examples of an Axiom A diffeomophism and an Axiom A flow respectively. That is, the non-wandering set is hyperbolic, and the periodic points are dense. It is well known (see e.g. \cite{KatokHasselblatt}) that for any point $x \in M_0$ there exist stable and unstable subspaces $E^{(s)}(x)$ and $E^{(u)}(x)$, and local stable and unstable manifolds $W_{\varepsilon}^{(s)}(x), W_{\varepsilon}^{(u)}(x) \subset M$. The stable manifold is simply the time reversal of the unstable manifold, that is $W_{\varepsilon}^{(s)}(x) = \mbox{Refl} W_{\varepsilon}^{(s)} (\mbox{Refl} x)$, where Refl: $\hat{Q} \rightarrow \hat{Q}$ is a bi-Lipschitz involution given by

\begin{displaymath}
\mbox{Refl}(q,v) = \left\{
\begin{array}{lr}
(q, -v) & \mbox{ for } q \in \mbox{int} Q\\
(q, 2 \langle n_K(q), v \rangle n_K(q) - v \rangle), & \mbox{ for } q \in \partial K.
\end{array}
\right.
\end{displaymath} 

\begin{definition} \cite{Pesinbook} \label{conformal}
	An Axiom A diffeomorphism $f$ with a hyperbolic set $\Lambda$ is called $u$-conformal (respectively, $s$-conformal) if there exists a continuous function $a^{(u)}(x)$ (respectively, $a^{(s)}(x)$) on $\Lambda$ such that $d_x f|_{E^{(u)}(x)} = a^{(u)}(x) \mbox{Isom}_x$ for all $x \in \Lambda$ (respectively, $d_x f|_{E^{(s)}(x)} = a^{(s)}(x) \mbox{Isom}_x$ for all $x \in \Lambda$), where $\mbox{Isom}_x$ is an isometry of $E^{(u)}$ or $E^{(s)}$. Then $f$ is called \textit{conformal} if it is both $u$-conformal and $s$-conformal.
\end{definition}

Since the stable and unstable subspaces are each one dimensional, the billiard map in the plane is trivially conformal. For higher dimensional billiards, $B$ is not conformal in general. We now define the convex fronts used to calculate and differentiate the functions $a^{(u)}$ and $a^{(s)}$.

\begin{definition}
	Let $z = (q,v) \in \Omega$ and let $z_0$ be the unique point on $M_0$ such that $S_t z_0 = z$ for some $t \geq 0$. Let $X = X(z) \subset Q$ be the unique convex curve containing $q$ such that for any $x_0 \in \Unstab{z_0}$, there exists $t \geq 0$ and $x \in X$ such that $S_t(x_0) = (x, \nu_X(x))$. Then $X$ is called a \textit{convex front}. Then for any $x \in X(z)$, define $k_z(x)$ to be the curvature of $X(z)$ at $x$. If $z \in M_0$ and $x \in \Unstab{z}$, then 
	$$k_z(x) = \lim_{t \downarrow 0} k_{S_t z}(S_t x).$$
\end{definition}
$X(z)$ is a $\mathcal{C}^r$ curve and the map $z \mapsto X(z)$ is at least $\mathcal{C}^1$ in general \cite{Sinai,ChaoticBilliards}. However if $y \in X(z)$ then the curve $X(y)$ overlaps with $X(z)$, so $z \mapsto X(z)$ is $\mathcal{C}^r$ when restricted to these curves. For a fixed $z$, the map $x \mapsto k_z(x)$ is $\mathcal{C}^{r-2}$ since curvature involves the second derivative.

Recall that the billiard ball map is an Axiom A diffeomorphism, and there exist functions $a^{(s)}, a^{(u)}: M_0 \rightarrow \mathbb{R}$, such that $(d_xB)v = a^{(s)}(x) \mbox{Isom}_x v$ for all $v \in E^{(s)}$ and $(d_xB)v = a^{(u)}(x) \mbox{Isom}_x v$ for all $v \in E^{(u)}$. 

\begin{proposition} 
	Let $K$ be a planar open billiard. Then the billiard map $B$ is conformal on its stable and unstable manifolds, and 
	$$a^{(u)}(x) = 1 + d(x) k_x(x), \hspace{4pt} a^{(s)}(x) = \frac{1}{1 + d(x) k_x(x)}.$$
\end{proposition}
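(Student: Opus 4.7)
The plan is to derive $a^{(u)}(x)$ and $a^{(s)}(x)$ via the convex wave-front description of the stable and unstable manifolds. Conformality itself is automatic in the planar case: since $E^{(u)}(x)$ and $E^{(s)}(x)$ are one-dimensional, $D_x B$ restricted to either is scalar multiplication, so the entire content of the proposition lies in identifying these two scalars.

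For $a^{(u)}(x)$, I would proceed in two geometric steps. The first step is a trigonometric lemma identifying the natural norm $\cos\phi(x)\,|dq|$ along $E^{(u)}(x)$ with arclength along the outgoing convex wave-front $X(x)$ at $q = \pi x$. Concretely, a tangent vector to $W^{(u)}(x)$ corresponding to a boundary displacement of arclength $|dq|$ along $\partial K$ matches a displacement of arclength $\cos\phi(x)\,|dq|$ along $X(x)$; this can be verified by a short calculation in the orthonormal basis $(n,n^\perp)$ at $q$, solving for the backward time offset that places the perturbed trajectory on the common front at time $0$. The second step is the standard propagation law for a 2D convex wave-front: a front of curvature $k$ flowed a distance $d$ along its normals has its radius of curvature grow from $1/k$ to $1/k+d$, so its arclength is multiplied by $1 + dk$. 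Applying this with $k = k_x(x)$ and $d = d(x)$, and then using the arclength-norm identification at both $x$ and $Bx$ to translate back to the natural norm, yields
$$
a^{(u)}(x) \;=\; \frac{\cos\phi(Bx)\,|dq_{Bx}|}{\cos\phi(x)\,|dq_{x}|} \;=\; 1 + d(x)\,k_x(x).
$$

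For $a^{(s)}(x)$, the cleanest route uses the fact that the billiard map $B$ preserves the Liouville area form on $M$. Together with the hyperbolic splitting $T_xM = E^{(u)}(x) \oplus E^{(s)}(x)$, this forces $a^{(u)}(x)\,a^{(s)}(x) = 1$, so $a^{(s)}(x) = 1/(1 + d(x)\,k_x(x))$. Alternatively, one can use the time-reversal involution Refl from the paper: the identities $W^{(s)}(x) = \mbox{Refl}\,W^{(u)}(\mbox{Refl}\,x)$ and $B \circ \mbox{Refl} = \mbox{Refl} \circ B^{-1}$, together with the observation that Refl is an isometry of the natural norm, reduce the computation of $a^{(s)}_B(x)$ to the $a^{(u)}$ computation applied to $B^{-1}$ at $\mbox{Refl}\,x$. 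The main obstacle is the first geometric step for $a^{(u)}$: correctly identifying arclength on $\partial K$ with arclength on $X(x)$ via the factor $\cos\phi$, and keeping orientation and sign conventions consistent throughout. Once this geometric dictionary is in place, both formulas follow at once from the wave-front propagation law and area preservation.
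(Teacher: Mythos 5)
Your derivation of $a^{(u)}$ is correct, and it is essentially the argument behind the sources the paper points to; the paper itself gives no proof here beyond citing Lemma 2.1 of Sinai and (3.40) of Chernov--Markarian. Conformality is indeed automatic from $\dim E^{(u)}=\dim E^{(s)}=1$, the norm $\cos\phi\,|dq|$ is exactly the width of the orthogonal cross-section of the outgoing beam (so it is preserved across reflections), and a divergent front of curvature $k$ has that width multiplied by $1+dk$ after a free flight of length $d$. So for the unstable coefficient your proposal supplies more detail than the paper does, and correctly.

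The gap is in your treatment of $a^{(s)}$. Area preservation does \emph{not} force $a^{(u)}(x)a^{(s)}(x)=1$: if $\omega$ is the invariant $2$-form and $\hat v^u_x,\hat v^s_x$ are unit vectors (in the norm $\cos\phi\,|dq|$) spanning $E^{(u)}(x)$ and $E^{(s)}(x)$, then invariance of $\omega$ only gives $a^{(u)}(x)\,a^{(s)}(x)=\omega(\hat v^u_x,\hat v^s_x)/\omega(\hat v^u_{Bx},\hat v^s_{Bx})$, a coboundary rather than the constant $1$. Computing this ratio shows $|a^{(u)}(x)a^{(s)}(x)|=\bigl|\bigl(1+d(x)\,k_x(x)\bigr)\bigl(1+d(x)\,k^{+}_s(x)\bigr)\bigr|$, where $k^{+}_s(x)<0$ is the post-collision curvature of the \emph{stable} (convergent) front at $x$; this is $1$ only when $k^{+}_s-k_x$ happens to be $B$-invariant, which fails in general because $k_x(x)$ is determined by the backward itinerary while $k^{+}_s(x)$ is determined by the forward one. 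Your time-reversal route is the right mechanism, but it yields $|a^{(s)}(x)|=|1+d(x)\,k^{+}_s(x)|$, i.e.\ the reciprocal of the unstable expansion of the reversed billiard at $\mathrm{Refl}(Bx)$ --- not $1/(1+d(x)k_x(x))$ with the unstable curvature at $x$. The two expressions agree on time-reversal-symmetric orbits and are always cohomologous, so they have the same pressure function; that is all Bowen's equation requires and is how the paper later gets $\mathcal{D}^{(s)}=\mathcal{D}^{(u)}$. But as a pointwise identity the stated formula for $a^{(s)}$ does not follow from either of your two arguments, and a correct pointwise version must be phrased with the curvature of the stable front.
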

\begin{proof}
See e.g. Lemma 2.1 of \cite{Sinai}, or (3.40) in \cite{ChaoticBilliards}.

\end{proof}

\noindent Note that $|a^{(s)}(x)| < 1$ and $|a^{(u)}(x)| > 1$ for all $x \in M_0$.

\subsection{Curvature of unstable manifolds} \label{6 Curvature of unstable manifolds} \hspace{2pt}

\begin{definition}
	For a fixed $\alpha \in I$ and $\zeta \in \Sigma$, we have a point $z = \chi(\zeta)$ and a convex front $X(z)$. Any point $x \in X$ has the same ``past'' as $z$, in the sense that $B^{-j}z$ and $B^{-j}x$ are on the same obstacle for all $j \geq 0$. For any $\xi \in \Sigma^+$, let $\chi^+_{\alpha, \zeta}(\xi)$ be the unique point $x$ on $X(z)$ satisfying $B^j x \in K_{\xi_j}$ for all $j \geq 0$. 
\end{definition}

Fix a sequence $\zeta \in M_0$ and let $z = \chi(\zeta)$. Then for any point $x \in X(z)$ (sufficiently close to $z$), let $k_j(x) = k_{B^j z}(B^j x)$ be the curvature of the convex front $X(B^j z)$ at $B^j x$. We have the following well-known reccurance relation for $k_j$ (see e.g. \cite{Sinai,ChaoticBilliards}). 
$$k_{j+1} = \frac{k_j}{1 + d_j k_j} + \gamma_j.$$
This equation is smooth for all $d_j, k_j > 0$. Since $\gamma_j$ is $\mathcal{C}^{\min\{r-3, r'-1\}}$ with respect to $\alpha$, $k_j$ is $\mathcal{C}^{\min\{r-3, r'-1\}}$. If $x$ is periodic with period $n$, then $k_n = k_0$ and it is possible to solve these equations for $k_0$. We can bound $k_{\min} \leq k_j \leq k_{\max}$, where $k_{\min}, k_{\max}$ are constants calculated in \cite{Kenny} and \cite{Wright1}.

Now by writing $x = \chi^+_{\alpha,\zeta}(\xi)$ we can differentiate with respect to $\alpha$ to get
\begin{align*}
\frac{d k_{j+1}}{d \alpha}	&= \frac{\frac{d k_j}{d \alpha} (1 + d_j k_j) - k_j (\frac{d d_j}{d \alpha} k_j + d_j \frac{d k_j}{d \alpha})}{(1 + d_j k_j)^2} + \frac{d \gamma_j}{d \alpha}\\
&= \frac{1}{(1 + d_j k_j)^2}  \frac{d k_j}{d \alpha} - \frac{k_j^2}{(1 + d_j k_j)^2} \frac{d d_j}{d \alpha} + \frac{d \gamma_j}{d \alpha}. 													
\end{align*}
Let $\beta_j = \frac{1}{(1 + d_j k_j)^2}$ and $\eta_j = \frac{d \gamma_j}{d \alpha} - \frac{k_j^2}{(1 + d_j k_j)^2} \frac{d d_j}{d \alpha}$. We have $\beta_j \leq \beta_{\max} = \frac{1}{(1 + d_{\min} k_{\min})^2}$, and $\eta_j \leq \eta_{\max} = C_\gamma + \frac{k_{\max}^2(2C^{(1)}_u + 1)}{(1 + d_{\min} k_{\max})^2} $. Then

\begin{align*}
\frac{d k_0}{d \alpha}	&= \frac{d k_n}{d \alpha} = \eta_{n-1} + \beta_{n-1} \frac{d k_{n-1}}{d \alpha}\\
&= \eta_{n-1} + \beta_{n-1}\eta_{n-2} + \ldots + \beta_1 \ldots \beta_{n-1}\eta_0 + \beta_0 \ldots \beta_{n-1}\frac{d k_0}{d \alpha}\\
&= \frac{1}{1 - \beta_0 \ldots \beta_{n-1}} \left(\eta_{n-1} + \beta_{n-1}\eta_{n-2} + \ldots + \beta_1 \ldots \beta_{n-1}\eta_0  \right)\\
&\leq \frac{1}{1 - \beta_{\max}^n} \left(1 + \beta_{\max} + \ldots + \beta_{\max}^{n-1} \right) \max_j \eta_j\\
&\leq \frac{1}{1 - \beta_{\max}}\left(C_\gamma + \frac{k_{\max}^2}{(1 + d_{\min} k_{\max})^2} (2C^{(1)}_u + 1)\right)\\
&\leq C_k.
\end{align*}

\begin{definition}
	Fix a sequence $\zeta \in \Sigma$. Define functions $\psi_{\alpha,\zeta}^{(u)}, \psi_{\alpha,\zeta}^{(s)}: \Sigma^+ \rightarrow \mathbb{R}$ as follows:
	\begin{align*}
	\psi_{\alpha,\zeta}^{(u)}(\xi) &= \log (1 + d(\chi^+_{\alpha,\zeta} \xi) k(\chi^+_{\alpha,\zeta} \xi)), \\
	\psi_{\alpha,\zeta}^{(s)}(\xi) &= -\log (1 + d(\chi^+_{\alpha,\zeta} \xi) k(\chi^+_{\alpha,\zeta} \xi)).
	\end{align*} 
\end{definition}
For a fixed $\xi \in \Sigma^+$, these functions are $\mathcal{C}^{\min\{r - 3, r' - 1\}}$ with respect to $\alpha$, and we have

\begin{align*}
\left| \frac{d \psi_{\alpha,\zeta}^{(u)}}{d \alpha}	\right|&= \frac{\frac{d d(x)}{d\alpha} k(x)+ \frac{d k(x)}{d\alpha} d(x)}{1 + d(x) k(x)}\\
&\leq \frac{C_d k(x) + C_k d(x)}{1 + d(x) k(x)}.
\end{align*}
The expression $\frac{C_d k + C_k d}{1 + dk}$ as a function of $d, k$ reaches its maximum at one of the four corners of the rectangle $[d_{\min}, d_{\max}] \times [k_{\min}, k_{\max}]$. Denote this maximum by $C_\psi$. Using the reflection property, it is easy to see that
$$\left| \frac{d \psi_{\alpha,\zeta}^{(u)}(\xi)}{d \alpha} \right|\leq C_\psi \mbox{ and } \left| \frac{d \psi_{\alpha,\zeta}^{(s)}(\xi)}{d \alpha} \right|\leq C_\psi \mbox{ for all } \xi \in \Sigma^+.$$

\section{Topological pressure and Bowen's equation}

\subsection{Entropy and pressure}

Let $X$ be a compact metric space, $f: X \rightarrow X$ a continuous map, $\Lambda \subset X$ a hyperbolic $f$-invariant subset, and $\psi: X \rightarrow \mathbb{R}$ a continuous function. Denote by $\mathfrak{M}(X)$ the set of all $f$-invariant Borel ergodic measures on $X$. Let $h_\mu(f)$ denote the topological entropy with respect to a measure $\mu \in \mathfrak{M}(X)$, and let $P(\psi) = P_{\Lambda}(\psi)$ denote the topological pressure on $\Lambda$, as defined as in \cite{Pesinbook} or \cite{Walters}. The \textit{variational principle} is

$$P_\Lambda(\psi) = \displaystyle \sup_{\mu \in \mathfrak{M}(X)} \left (h_\mu(f) + \int_\Lambda \psi d \mu \right).$$
There is a unique equilibrium measure $\mu = \mu(\psi)$ corresponding to $\psi$ that satisfies $P_\Lambda(\psi) = h_{\mu}(f) + \int_\Lambda \psi d \mu$ \cite{Pesinbook}.

\begin{proposition}
	For an open billiard, the entropy of the billiard map $B$ is given by $h(B) = \log (m-1)$ where $m$ is the number of obstacles. 
\end{proposition}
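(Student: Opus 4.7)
The plan is to reduce the computation to a standard entropy calculation on a subshift of finite type via the topological conjugacy established in Section \ref{Symbolic model}. Recall that $\chi: M_0 \to \Sigma$ is a homeomorphism (with respect to the metric $d_\theta$) such that $B = \chi^{-1} \circ \sigma \circ \chi$, where $\sigma$ is the two-sided shift on
$$\Sigma = \{\xi = (\xi_i)_{i \in \mathbb{Z}} : \xi_i \in \{1,\ldots,m\}, \xi_i \neq \xi_{i+1}\}.$$
Since topological entropy is a conjugacy invariant, it suffices to show $h(\sigma) = \log(m-1)$.

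The shift $(\Sigma,\sigma)$ is a subshift of finite type with transition matrix $A \in \{0,1\}^{m \times m}$ given by $A_{ij} = 1$ if $i \neq j$ and $A_{ii} = 0$; equivalently $A = J - I$ where $J$ is the all-ones matrix. The simplest way to finish is to count admissible $n$-cylinders directly: the number of admissible words $(\xi_0,\ldots,\xi_{n-1})$ is $m(m-1)^{n-1}$, because $\xi_0$ has $m$ choices and each subsequent $\xi_{i+1}$ has $m-1$ choices (anything but $\xi_i$). Since $\Sigma$ is a topologically mixing SFT, the topological entropy coincides with the exponential growth rate of the number of admissible words, so
$$h(\sigma) = \lim_{n \to \infty} \frac{1}{n} \log\bigl(m(m-1)^{n-1}\bigr) = \log(m-1).$$
Alternatively, one observes that the eigenvalues of $J-I$ are $m-1$ (simple, with eigenvector $(1,\ldots,1)$) and $-1$ (with multiplicity $m-1$), so by the Perron--Frobenius formula for SFTs, $h(\sigma) = \log \rho(A) = \log(m-1)$.

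There is no serious obstacle here; the only point worth noting is that one must invoke the symbolic conjugacy on all of $M_0$ (not just on periodic points) to legitimately transfer the entropy computation, which is exactly what is provided by the results cited in Section \ref{Symbolic model}. Combining the conjugacy with either the counting argument or the Perron--Frobenius calculation yields $h(B) = h(\sigma) = \log(m-1)$.
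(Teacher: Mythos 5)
Your proof is correct, but it takes a different route from the paper's. The paper does not pass through the symbolic conjugacy at all: it invokes Bowen's theorem that for an Axiom A diffeomorphism the topological entropy equals the exponential growth rate $\lim_n \frac{1}{n}\log T(m,n)$ of the number of $n$-periodic trajectories, and then cites Stoyanov's count of periodic billiard orbits to evaluate that limit as $\log(m-1)$. You instead transfer the computation to the subshift of finite type $(\Sigma,\sigma)$ via the homeomorphism $\chi$ (which the paper does establish in its symbolic-model section and which, since $M_0$ is compact and entropy is a conjugacy invariant, legitimately reduces the problem), and then compute $h(\sigma)$ either by counting admissible words, $m(m-1)^{n-1}$, or as $\log\rho(J-I)=\log(m-1)$. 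Your argument is more elementary and self-contained given the symbolic model: it avoids the nontrivial periodic-orbit growth theorem and the external count of periodic billiard trajectories, replacing both with standard SFT combinatorics. The paper's version is shorter on the page because it delegates both ingredients to references, and it stays within the periodic-orbit framework that the rest of the paper leans on. The two computations agree, of course, since the number of cyclically admissible $n$-periodic words is $\operatorname{tr}\bigl((J-I)^n\bigr)=(m-1)^n+(m-1)(-1)^n$, which has the same growth rate $\log(m-1)$ as the word count you use.
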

\begin{proof}
From \cite{Katok, Bowen}, we have
$$h_{\text{top}} = \lim_{n \rightarrow \infty} \frac{1}{n} T(m, n),$$
where $T(m, n)$ is the number of $n$-periodic trajectories with $m$ obstacles. From \cite{Stoyanov1}, this is equal to $\log (m-1)$. 
\end{proof}

\subsection{Classical topological pressure of a function via separated sets} \label{Pressure of a function by separated sets}

For $\varepsilon > 0, n \in \mathbb{N}$, let
$$P_n(f, \psi, Y, \varepsilon) = \sup \left \{\sum_{x \in E } \exp S_n \psi(x)  : E \text{ is $(n,\varepsilon)-$separated} \right\}.$$
Then the \textit{classical topological pressure} is 
$$P_{\text{classical}}(f, \psi, Y) = \lim_{\varepsilon \rightarrow 0} \limsup_{n \rightarrow \infty} \frac{1}{n} \log P_n(f, \psi, \varepsilon).$$
The \textit{topological entropy} is defined by $h_{\text{top}}(f, Y) = P(f, 0, Y)$. When $f: M \rightarrow M$ has a hyperbolic set $\Lambda$, we will write $h_{\text{top}}(f)$.

\subsection{Pressure on the symbol space} \label{PP pressure}
Pressure and entropy can also be defined using operators on the symbol space $\Sigma^+$ (see \cite{Parry-Pollicott}). For a Lipschitz function $\psi \in C(\Sigma^+)$, that is $\psi:\Sigma^+ \rightarrow \mathbb{R}$, define the Ruelle operator $L_\psi: C(\Sigma^+) \rightarrow C(\Sigma^+)$ by $(L_\psi w)(x) = \displaystyle \sum_{\sigma \xi' = \xi} e^{\psi \xi'} w(\xi')$. Then $L_\psi$ is a bounded linear operator. The Ruelle-Perron-Frobenius theorem guarantees a simple maximum positive eigenvalue $\beta$ for $L_\psi$. We define $P_{\Sigma^+}(\psi) = \log \beta$. The topological entropy can then be defined as $h_{\text{top}}(f) = P(0)$. There is a unique probability measure $\tilde{\mu} = \tilde{\mu}(\psi)$ such that 
$$\int_{\Sigma+} L_\psi v d\tilde{\mu} = \beta \int_{\Sigma^+} v d\tilde{\mu}.$$

\begin{proposition} \label{PP prop} (See e.g. Proposition 4.10 from \cite{Parry-Pollicott})\\
	Let $f, g: \Sigma^+ \rightarrow \mathbb{R}$ and let $\tilde{\mu} = \tilde{\mu}(\psi)$. Then 
	$$ \left. \frac{d}{d s} P(f + s g) \right|_{s = 0} = \int_{\Sigma+} g d\tilde{\mu}.$$
\end{proposition}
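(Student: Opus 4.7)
The plan is to prove Proposition \ref{PP prop} via analytic perturbation theory for the Ruelle operator. The key observation is that $P(f+sg) = \log \beta(f+sg)$ where $\beta$ is the leading eigenvalue of the Ruelle operator, so differentiating pressure reduces to differentiating this eigenvalue.

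First I would set up the perturbation. Work on a Banach space of Lipschitz (or H\"older) functions on $\Sigma^+$ equipped with the metric $d_\theta$. Observe that the family $s \mapsto L_{f+sg}$ is analytic in the operator-norm sense: since $e^{f+sg} = e^f \sum_{k \ge 0} (sg)^k/k!$ converges uniformly and $g$ is bounded Lipschitz, one gets
\[
L_{f+sg} = L_f + s\, L_f M_g + O(s^2),
\]
where $M_g w = gw$, and the expansion holds in operator norm. More concretely, $\frac{d}{ds}L_{f+sg}|_{s=0}\, w = L_f(gw)$, as follows from direct differentiation of $(L_{f+sg}w)(\xi) = \sum_{\sigma\xi'=\xi} e^{(f+sg)(\xi')} w(\xi')$.

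Next I would invoke the Ruelle-Perron-Frobenius theorem for $L_f$: there is a simple leading positive eigenvalue $\beta = \beta(f)$, isolated from the rest of the spectrum, with positive Lipschitz eigenfunction $h$ and a positive eigenmeasure $\nu$ for the dual: $L_f h = \beta h$ and $L_f^* \nu = \beta \nu$. By Kato's analytic perturbation theory for isolated simple eigenvalues, $s \mapsto \beta(f+sg)$ and $s \mapsto h_s$ (the corresponding eigenfunction) are real analytic in a neighborhood of $s=0$. This is the step that I expect to be the main technical obstacle, since one needs to verify that the hypotheses of Kato's theorem apply in the chosen function space (and the $d_\theta$ Lipschitz norm is the natural choice because this is where the spectral gap lives).

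Once differentiability is granted, the computation is straightforward. Differentiate the eigenvalue equation $L_{f+sg} h_s = \beta(f+sg)\, h_s$ at $s=0$:
\[
L_f(g h) + L_f h' = \beta'\, h + \beta\, h',
\]
where $\beta' = \frac{d}{ds}\beta(f+sg)|_{s=0}$ and $h' = \frac{d}{ds} h_s|_{s=0}$. Integrate against the eigenmeasure $\nu$ and use $\int L_f(\cdot)\, d\nu = \beta \int (\cdot)\, d\nu$ to cancel the $h'$ terms, obtaining
\[
\beta \int g h\, d\nu = \beta' \int h\, d\nu.
\]
Finally I would identify the equilibrium measure: normalizing $h$ so that $\int h\, d\nu = 1$, the measure $d\tilde{\mu} = h\, d\nu$ is precisely the unique equilibrium state $\tilde{\mu}(f)$ (this is standard; $\tilde\mu$ is $\sigma$-invariant because $h\nu$ is preserved by the normalized transfer operator). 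Dividing by $\beta$ yields
\[
\left. \frac{d}{ds} P(f+sg) \right|_{s=0} = \frac{\beta'}{\beta} = \int_{\Sigma^+} g\, d\tilde{\mu},
\]
which is the desired formula.
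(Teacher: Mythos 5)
Your proof is correct and follows essentially the same route as the source the paper cites for this statement (Parry--Pollicott, Proposition 4.10): the paper itself offers no proof beyond that citation, and the standard argument is precisely the eigenvalue-perturbation computation you give, namely $P(f+sg)=\log\beta(f+sg)$, analyticity of $s\mapsto L_{f+sg}$ and of the isolated simple eigenvalue on the space of $d_\theta$-Lipschitz functions, and pairing the differentiated eigenvalue equation with the dual eigenmeasure $\nu$. A minor point in your favour: you correctly identify the measure in the formula as the equilibrium state $h\,d\nu$ (with $\int h\,d\nu=1$) rather than the bare dual eigenmeasure, which is what the derivative formula actually requires.
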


\begin{corollary} \label{PP cor}
	Let $I$ be an interval in $\mathbb{R}$. Let $f_s: \Sigma^+ \rightarrow \mathbb{R}$ be a function such that for any fixed $\xi \in \Sigma^+$, $f_s(\xi)$ is $\mathcal{C}^2$ with respect to $s$. For any $s_0 \in I$, let $\mu_0$ be the equilibrium measure for $f_{s_0}$. Then
	
	$$\left. \frac{d}{d s} P(f_s) \right|_{s = s_0} = \int_{\Sigma+} \left. \frac{d f_s}{d s} \right|_{s = s_0}  d\mu_0.$$
	
\end{corollary}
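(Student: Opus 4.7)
The plan is to reduce Corollary \ref{PP cor} to Proposition \ref{PP prop} via a first-order Taylor expansion of $f_s$ about $s=s_0$, together with the standard Lipschitz property of topological pressure. Set $g(\xi) = \frac{d}{ds}f_s(\xi)\big|_{s=s_0}$. First I would use the $\mathcal{C}^2$ hypothesis (which in all our later applications is accompanied by uniform bounds on $\frac{d^2 f_s}{ds^2}$ over $\xi \in \Sigma^+$, inherited from the derivative estimates in Sections \ref{Derivatives of parameters} and \ref{6 Derivatives of other properties}) together with Taylor's theorem to write
\begin{equation*}
f_s(\xi) = f_{s_0}(\xi) + (s-s_0)\,g(\xi) + r(s,\xi), \qquad \sup_{\xi \in \Sigma^+} |r(s,\xi)| \le C\,(s-s_0)^2
\end{equation*}
for $s$ sufficiently close to $s_0$, where $C$ is independent of $\xi$.

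Next I would invoke the Lipschitz estimate for topological pressure, namely $|P(\phi_1) - P(\phi_2)| \le \|\phi_1 - \phi_2\|_\infty$ (see \cite{Walters}), applied with $\phi_1 = f_s$ and $\phi_2 = f_{s_0} + (s-s_0)g$, to conclude
\begin{equation*}
\bigl|P(f_s) - P\bigl(f_{s_0} + (s-s_0)g\bigr)\bigr| \le C\,(s-s_0)^2.
\end{equation*}
Dividing by $s-s_0$ and letting $s \to s_0$, the remainder contributes nothing, and the computation of $\frac{d}{ds} P(f_s)\big|_{s=s_0}$ reduces to the derivative of $P$ along the affine family $s \mapsto f_{s_0} + (s-s_0)g$ in function space.

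Finally I would apply Proposition \ref{PP prop} directly to this affine family (with the base function $f_{s_0}$ and perturbation direction $g$) to obtain
\begin{equation*}
\frac{d}{ds}\bigg|_{s=s_0} P\bigl(f_{s_0} + (s-s_0)g\bigr) = \int_{\Sigma^+} g\,d\mu_0 = \int_{\Sigma^+} \frac{df_s}{ds}\bigg|_{s=s_0}\,d\mu_0,
\end{equation*}
where $\mu_0$ is the equilibrium measure associated with $f_{s_0}$. Chaining these two equalities gives the stated formula.

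The main obstacle is the uniformity in $\xi$ of the Taylor remainder, since Proposition \ref{PP prop} as stated only provides differentiability of $P$ along affine paths in the space of continuous functions on $\Sigma^+$, and not along arbitrary $\mathcal{C}^2$ curves. Once a quadratic remainder uniform in $\xi$ is established, the Lipschitz estimate for $P$ absorbs this difference at order $(s-s_0)^2$, and the corollary follows; the remainder of the argument is bookkeeping around the definition of $g$ and $\mu_0$.
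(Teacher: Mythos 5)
Your proof is correct in substance but follows a genuinely different route from the paper. The paper's own argument introduces a two-parameter family $f_{s,t}(\xi) = f_{s_0}(\xi) + (s-s_0)\gamma(t,\xi)$, where $\gamma(t,\xi)$ is the difference quotient of $f_t$ at $s_0$ (extended by the derivative at $t=s_0$); it applies Proposition \ref{PP prop} to the affine-in-$s$ family for each fixed $t$, and then recovers $\frac{d}{ds}P(f_s)$ by a chain-rule computation along the diagonal $t=s$, using that $p(s,t)=P(f_{s,t})$ satisfies $\partial_t p(s_0,t)=0$ because $f_{s_0,t}=f_{s_0}$ for all $t$. You instead linearize $f_s$ by Taylor's theorem and absorb the quadratic remainder using the sup-norm Lipschitz property $|P(\phi_1)-P(\phi_2)|\le\|\phi_1-\phi_2\|_\infty$, reducing directly to Proposition \ref{PP prop} for the affine family $f_{s_0}+(s-s_0)g$. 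Your version is in some ways more transparent: it replaces the paper's claim that $p(s,t)$ is jointly $\mathcal{C}^1$ (which itself quietly requires $t\mapsto\gamma(t,\cdot)$ to be well behaved in a function-space norm, not merely pointwise in $\xi$) by the elementary Lipschitz estimate for pressure. The price is that you need the Taylor remainder to be $O((s-s_0)^2)$, or at least $o(s-s_0)$, \emph{uniformly} in $\xi$, which does not follow from the stated hypothesis that $f_s(\xi)$ is $\mathcal{C}^2$ in $s$ for each fixed $\xi$ alone; you correctly flag this and note that the uniform second-derivative bounds hold in all of the paper's applications (and one should also check that the remainder is H\"older in $\xi$ so that the Ruelle-operator pressure applies, which it is, being a difference of H\"older functions). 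With that uniformity made an explicit hypothesis, your argument is complete; without it, neither your proof nor, arguably, the paper's fully covers the corollary as literally stated.
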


\begin{proof}

For $t \in I$, define a function 
\begin{displaymath}
\gamma(t, \xi) = \left\{
\begin{array}{lr}
\left. \frac{\partial f_t(\xi)}{\partial t} \right|_{t = s_0}		&: t = s_0\\
\frac{f_t(\xi) - f_{s_0}(\xi)}{t - s_0} 								&:  \mbox{otherwise}
\end{array}
\right.
\end{displaymath}
Since $f_t$ is at least $\mathcal{C}^2$ with respect to $t$, $\gamma$ is at least $\mathcal{C}^1$. Define a function by 
$$f_{s, t}(\xi) = f_{s_0}(\xi) + (s - s_0) \gamma(t, \xi).$$
For a fixed $\xi$ this is a $\mathcal{C}^1$ function of two variables. Clearly $f_{s,s}(\xi) = f_s(\xi)$. At $s= s_0$ we have $f_{s_0, t} = f_{s_0}(\xi)$ for all $t \in I$. By Proposition \ref{PP prop}, we have
\begin{align*}
\left. \frac{\partial P(f_{s,t})}{\partial s} \right|_{s = s_0}		&= \int_{\Sigma+} \gamma(t, \xi) d\mu_0,\\
\left. \frac{\partial P(f_{s,t})}{\partial s} \right|_{s = t = s_0} &= \int_{\Sigma+} \gamma(s_0, \xi) d\mu_0 = \int_{\Sigma+} \left. \frac{\partial f_s}{\partial s}(\xi) \right|_{s = s_0}  d\mu_0.
\end{align*}
It remains to show that 
$$\left. \frac{d P(f_s)}{d s} \right|_{s = s_0} = \left. \frac{\partial P(f_{s,t})}{\partial s} \right|_{s = t = s_0}.$$
Define a function $p(s, t) = P(f_{s,t}(\xi))$. Since $P$ is analytic, this is a $\mathcal{C}^1$ function of two variables. Consider $t$ as a function $t = t(s) = s$. Then by the chain rule, we have
$$\left. \frac{d P(f_s)}{d s} \right|_{s = s_0} = \left. \frac{d p}{d s}(s,t(s)) \right|_{s = s_0}	= \frac{\partial p}{\partial s}(s_0, s_0) + \frac{\partial t}{\partial s} \frac{\partial p}{\partial t}(s_0, s_0) .$$
The derivative $\frac{\partial p}{\partial t}(s,t)$ is unknown for $s \neq s_0$, but since $p(s_0, t) = P(f_{s_0,t}) = P(f_{s_0})$ is constant with respect to $t$, we have $\frac{\partial p}{\partial t}(s_0, t) = 0$ for any $t$. So 
\begin{align*}
\frac{d p}{d s}(s_0,s_0)						&= \frac{\partial p}{\partial s}(s_0, s_0) + 0,\\
\left. \frac{d P(f_s)}{d s} \right|_{s = s_0}	&= \left. \frac{\partial P(f_{s,t})}{\partial s} \right|_{s = t = s_0} = \int_{\Sigma+} \left. \frac{\partial f_s}{\partial s} \right|_{s = s_0} d\mu_0,
\end{align*}
as required.
\end{proof}

\subsection{Bowen's equation}

Bowen's equation can refer to any of a number of equations of the form $P(-s \psi) = 0$, where $s$ is a dimension, $P$ is the topological pressure, and $\psi$ is a function related to the dynamical system. The first use of the equation was Bowen's paper on quasi-circles \cite{quasicircles}. Manning and McCluskey used Bowen's equation to calculate the Hausdorff dimension of Smale horseshoes in \cite{Manning-McCluskey}. It is used by Barreira and Pesin in \cite{Barreira1,Pesinbook} to calculate the Hausdorff dimension of hyperbolic sets.

\begin{theorem} \label{Pesin theorem} (Theorem 22.1 of \cite{Pesinbook}).
	Let $\Lambda$ be the non-wandering set for a conformal Axiom A diffeomorphism $f$ on a Riemannian manifold $M$. Let $t^{(u)}$, $t^{(s)}$ be the unique roots of Bowen's equation,
	\begin{equation} \label{Bowen's equation}
	P(-t^{(u)} \log |a^{(u)}(x)|) = 0 \mbox{ and } P(t^{(s)} \log |a^{(s)}(x)|) = 0.
	\end{equation}
	Let $\kappa^{(u)}$ and $\kappa^{(s)}$ be the unique equilibrium measures corresponding to the functions $-t^{(u)} \log |a^{(u)}(x)|$ and $t^{(s)} \log |a^{(s)}(x)|$.
	
	\begin{enumerate}
		\item For any $z \in M_0$ and any open set $U \subset W^{(u)}(z)$ such that $U \cap M_0 \neq \emptyset$, 
		$$\dim_H(U \cap M_0) = \underline{\dim}_B(U \cap M_0) = \overline{\dim}_B(U \cap M_0) = \mathcal{D}^{(u)}.$$
		\item For any $z \in M_0$ and any open set $S \subset W^{(s)}(z)$ such that $S \cap M_0 \neq \emptyset$, 
		$$\dim_H(S \cap M_0) = \underline{\dim}_B(S \cap M_0) = \overline{\dim}_B(S \cap M_0) = \mathcal{D}^{(s)}.$$
		\item The dimensions of the hyperbolic set $\Lambda$ are given by
		$$\dim_H(M_0) = \underline{\dim}_B(M_0) = \overline{\dim}_B(M_0) = \mathcal{D}^{(u)} + \mathcal{D}^{(s)}.$$
		\item The numbers $\mathcal{D}^{(u)}$ and $\mathcal{D}^{(s)}$ satisfy
		\begin{equation}
		\mathcal{D}^{(u)} = \frac{h_{\kappa^{(u)}}(f)}{\int_{\Lambda} \log|a^{(u)}(x)| d\kappa^{(u)}} \mbox{ and } \mathcal{D}^{(s)} = \frac{h_{\kappa^{(s)}}(f)}{\int_{\Lambda} \log|a^{(s)}(x)| d\kappa^{(s)}},
		\end{equation}
		where $h_{\mu}(f)$ is the entropy of $f$ with respect to $\mu$.
	\end{enumerate}

\end{theorem}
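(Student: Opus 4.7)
Since the statement is quoted verbatim as Theorem 22.1 of \cite{Pesinbook}, I would ordinarily just cite it and proceed; however, the argument is worth sketching because the billiard map on the one-dimensional stable and unstable subbundles is trivially conformal, which simplifies matters considerably.

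The plan begins with the Markov partition that accompanies any Axiom A diffeomorphism, yielding the topological conjugacy $\chi \colon \Sigma \to M_0$ of Section \ref{Symbolic model}. For $x$ in the partition and $n \geq 1$, the cylinder $C_n(x)$ intersected with $W^{(u)}(x)$ has diameter comparable to $\exp\bigl(-S_n \log|a^{(u)}|(x)\bigr)$ by conformality and the chain rule, where $S_n$ denotes the Birkhoff sum. Next I invoke the Bowen/Ruelle construction of the Gibbs/equilibrium measure $\kappa^{(u)}$ for the potential $-t^{(u)}\log|a^{(u)}|$; using $P(-t^{(u)}\log|a^{(u)}|) = 0$ this yields a two-sided bound
$$C^{-1}\,\mathrm{diam}\bigl(C_n(x) \cap W^{(u)}(x)\bigr)^{t^{(u)}} \leq \kappa^{(u)}(C_n(x)) \leq C\,\mathrm{diam}\bigl(C_n(x) \cap W^{(u)}(x)\bigr)^{t^{(u)}}.$$
A Frostman-type mass-distribution argument applied with these cylinders then forces $\dim_H(U \cap M_0 \cap W^{(u)}(z)) = t^{(u)}$, and the Gibbs property pins cylinder measures tightly enough that the same cover gives matching upper and lower box dimensions. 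This handles assertion (1); assertion (2) follows by applying the bi-Lipschitz involution Refl, which exchanges stable and unstable manifolds and conjugates $a^{(u)}$ to $1/a^{(s)}$.

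For the product formula (3), the local product structure of the hyperbolic set provides a bi-Lipschitz identification of a neighbourhood of $z$ in $M_0$ with $(W^{(u)}(z) \cap M_0) \times (W^{(s)}(z) \cap M_0)$. Since $B$ is conformal and the relevant holonomies are Lipschitz, upper covers built as products of cylinders yield $\overline{\dim}_B M_0 \leq \mathcal{D}^{(u)} + \mathcal{D}^{(s)}$, while the lower bound on $\dim_H M_0$ follows from a Marstrand-type slicing argument applied to the product measure $\kappa^{(u)} \otimes \kappa^{(s)}$. Assertion (4) comes by differentiating the defining identity $P(-t \log|a^{(u)}|) = 0$ implicitly in $t$, using Corollary \ref{PP cor}, and combining with the variational principle $P = h_\mu + \int \psi \, d\mu$ evaluated at the equilibrium measure to convert the expression into the stated ratio.

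The main obstacle is the product formula: upper bounds from product covers are routine, but the lower bound on $\dim_H M_0$ requires that the unstable and stable dimensions genuinely decouple. This in turn depends on the Lipschitz regularity of the holonomies, which holds here because the two-dimensional setting makes the relevant foliations one-dimensional and uniformly transverse (a consequence of $(\textbf{H})$ and $\phi_{\max} < \pi/2$). Once this decoupling is secured, the remaining pieces reduce to the by-now-standard equilibrium-state and variational-principle machinery invoked in Section \ref{PP pressure}.
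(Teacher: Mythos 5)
The paper offers no proof of this statement at all: it is imported verbatim as Theorem 22.1 of \cite{Pesinbook} and simply cited, which is also the first thing you say you would do. Your sketch reproduces the standard Pesin/McCluskey--Manning argument (Gibbs estimates on cylinders plus the mass distribution principle for (1)--(2), Lipschitz holonomies and the Marstrand product inequality for (3), and the variational principle at the equilibrium state for (4) --- note that (4) needs no implicit differentiation, only $0 = h_{\kappa^{(u)}} - t^{(u)}\int \log|a^{(u)}|\,d\kappa^{(u)}$), and it is essentially the proof in the cited source.
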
 

\section{Bounds on Hausdorff dimension}
Let $\mathcal{D}^{(u)} = \mathcal{D}^{(u)}(z), \mathcal{D}^{(s)} = \mathcal{D}^{(u)}(z)$ be the solution to Bowen's equations
$$P(-\mathcal{D}^{(u)} \log |a^{(u)}|) = 0 \mbox{ and } P(\mathcal{D}^{(s)} \log |a^{(s)}|) = 0.$$
Let $\kappa^{(u)}$, $\kappa^{(s)}$ be the unique equilibrium measures corresponding to the functions $-\mathcal{D}^{(u)} \log |a^{(u)}|$, $\mathcal{D}^{(s)} \log |a^{(s)}|$. Now recall Theorem \ref{Pesin theorem}. 

\begin{theorem}
	Let $\mu_0$ be the measure such that $h_{\emph{top}} = h_{\mu_0}$. Then the dimension of the unstable manifold satisfies
	$$\frac{\log(m - 1)}{\int_{M_0} \log |a^{(u)}| d \mu_0} \leq \mathcal{D}^{(u)} = \frac{h_{\kappa^{(u)}}}{\int_{M_0} \log |a^{(u)}| d\kappa^{(u)}} \leq  \frac{\log(m - 1)}{\int_{M_0} \log |a^{(u)}| d\kappa^{(u)}}.$$
\end{theorem}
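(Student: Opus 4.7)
The plan is to deduce both inequalities from the variational principle together with the preceding proposition that $h_{\text{top}}(B) = \log(m-1)$; the middle equality is exactly part (4) of Theorem \ref{Pesin theorem} applied to the conformal Axiom A diffeomorphism $B$ on $M_0$, so no new work is needed for it.

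For the upper bound, I would start from the identity
$$\mathcal{D}^{(u)} = \frac{h_{\kappa^{(u)}}(B)}{\int_{M_0} \log|a^{(u)}|\,d\kappa^{(u)}}$$
and use the variational principle, which says that $h_{\mu}(B) \leq h_{\text{top}}(B) = \log(m-1)$ for every $B$-invariant Borel ergodic measure $\mu$, in particular for $\mu = \kappa^{(u)}$. Since $|a^{(u)}(x)| > 1$ on $M_0$, the integral in the denominator is strictly positive, and replacing $h_{\kappa^{(u)}}$ in the numerator by the larger quantity $\log(m-1)$ gives the right-hand inequality.

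For the lower bound, I would use Bowen's equation $P(-\mathcal{D}^{(u)} \log|a^{(u)}|) = 0$ together with the variational principle in the reverse direction, applied to the measure $\mu_0$ of maximal entropy:
$$0 = P(-\mathcal{D}^{(u)} \log|a^{(u)}|) \;\geq\; h_{\mu_0}(B) - \mathcal{D}^{(u)} \int_{M_0} \log|a^{(u)}|\,d\mu_0 \;=\; \log(m-1) - \mathcal{D}^{(u)} \int_{M_0} \log|a^{(u)}|\,d\mu_0.$$
Rearranging (again using that the integral is strictly positive) yields the left-hand inequality.

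I don't expect any genuine obstacle here: the argument is essentially a two-sided sandwich of $\mathcal{D}^{(u)}$ between the quotients obtained by putting $\mu_0$ and $\kappa^{(u)}$ into the numerator/denominator pair, and both sides are just restatements of the variational inequality $P(\psi) \geq h_\mu(B) + \int \psi\,d\mu$ specialized to $\psi = -\mathcal{D}^{(u)} \log|a^{(u)}|$. The only detail worth being careful about is confirming that $\log|a^{(u)}|$ is strictly positive on $M_0$ (which follows from hyperbolicity, and was already remarked after the proposition giving $a^{(u)} = 1 + d\,k$) so that division by $\int \log|a^{(u)}|\,d\mu$ preserves the direction of the inequality.
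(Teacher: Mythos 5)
Your proposal is correct and follows essentially the same route as the paper: the upper bound from $h_{\kappa^{(u)}} \leq h_{\text{top}} = \log(m-1)$, and the lower bound from Bowen's equation $P(-\mathcal{D}^{(u)}\log|a^{(u)}|)=0$ combined with the variational inequality evaluated at the measure of maximal entropy $\mu_0$. The paper phrases the lower bound as the equilibrium measure realizing the supremum, which therefore dominates the value at $\mu_0$, but this is the same computation you give.
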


\begin{proof}
The topological entropy is 
$$\displaystyle\sup_\mu h_\mu = h_{\text{top}} = \log(m-1).$$  
Since $\kappa^{(u)}$ is the equilibrium measure for $-\mathcal{D}^{(u)} \log |a^{(u)}|$, it satisfies
$$h_{\kappa^{(u)}} - \mathcal{D}^{(u)} \int_{M_0} \log |a^{(u)}| d\kappa^{(u)} = \sup_\mu \left(h_{\mu} - \mathcal{D}^{(u)} \int_{M_0} \log |a^{(u)}| d \mu \right).$$ 
So in particular, 

\begin{align*}
h_{\kappa^{(u)}} - \mathcal{D}^{(u)} \int_{M_0} \log |a^{(u)}| d\kappa^{(u)}	&\geq h_{\mu_0} - \mathcal{D}^{(u)} \int_{M_0} \log |a^{(u)}| d \mu_0\\
0																		&\geq h_{\text{top}} - \mathcal{D}^{(u)} \int_{M_0} \log |a^{(u)}| d \mu_0\\
\mathcal{D}^{(u)}																	&\geq \frac{\log(m - 1)}{\int_{M_0} \log |a^{(u)}| d \mu_0}.
\end{align*}

The other inequality is trivial since $h_{\kappa^{(u)}} \leq h_{\text{top}} = \log(m-1)$.
\end{proof}

\begin{theorem}
	Let $\mu_0$ be the measure such that $h_{\emph{top}} = h_{\mu_0}$. Then the dimension of the unstable manifold satisfies
	$$\frac{\log(m - 1)}{\int_{M_0} \log |a^{(s)}| d \mu_0} \leq \mathcal{D}^{(s)} = \frac{h_{\kappa^{(s)}}}{\int_{M_0} \log |a^{(s)}| d\kappa^{(s)}} \leq  \frac{\log(m - 1)}{\int_{M_0} \log |a^{(s)}| d\kappa^{(s)}}.$$
\end{theorem}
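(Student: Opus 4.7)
The proof is a verbatim mirror of the preceding unstable-side theorem, with every $u$-superscript replaced by an $s$-superscript. The three ingredients I would reuse are the variational principle together with the characterisation of $\kappa^{(s)}$ as the equilibrium measure for the potential $\mathcal{D}^{(s)}\log|a^{(s)}|$, the identity $h_{\text{top}} = h_{\mu_0} = \log(m-1)$, and the formula $\mathcal{D}^{(s)} = h_{\kappa^{(s)}}/\int_{M_0} \log|a^{(s)}|\,d\kappa^{(s)}$ supplied by Theorem \ref{Pesin theorem}(4).

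For the lower bound, I would start from the defining equality of the equilibrium measure,
$$h_{\kappa^{(s)}} + \mathcal{D}^{(s)}\int_{M_0} \log|a^{(s)}|\,d\kappa^{(s)} = \sup_\mu \left( h_\mu + \mathcal{D}^{(s)}\int_{M_0} \log|a^{(s)}|\,d\mu \right) = P(\mathcal{D}^{(s)}\log|a^{(s)}|) = 0,$$
instantiate the supremum with the measure of maximal entropy $\mu_0$, and substitute $h_{\mu_0} = \log(m-1)$, arriving at
$$0 \geq \log(m-1) + \mathcal{D}^{(s)} \int_{M_0} \log|a^{(s)}|\,d\mu_0.$$
Solving for $\mathcal{D}^{(s)}$ gives the desired lower inequality. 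For the upper bound, I would substitute the trivial estimate $h_{\kappa^{(s)}} \leq h_{\text{top}} = \log(m-1)$ into the formula for $\mathcal{D}^{(s)}$ from Theorem \ref{Pesin theorem}(4), yielding precisely the right-hand inequality in the statement.

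I do not foresee a genuine obstacle, since the argument is identical to the one already written for the unstable case. The only bookkeeping point worth highlighting is that $|a^{(s)}(x)| < 1$ on $M_0$, so the integrals $\int_{M_0} \log|a^{(s)}|\,d\mu$ carry a definite sign; all rearrangements that divide by such an integral must respect the sign convention already fixed in Theorem \ref{Pesin theorem}, so that the orientation of the inequalities is preserved throughout.
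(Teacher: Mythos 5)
Your proposal is correct and matches the paper exactly: the paper's own proof of this theorem is just the one-line remark that it is ``very similar to the previous theorem,'' and your mirrored argument (equilibrium property of $\kappa^{(s)}$ via the variational principle, instantiation at $\mu_0$ with $h_{\mu_0}=\log(m-1)$, and the trivial bound $h_{\kappa^{(s)}}\leq h_{\text{top}}$) is precisely the intended instantiation. Your closing caveat about the sign of $\int_{M_0}\log|a^{(s)}|\,d\mu$ (since $|a^{(s)}|<1$) is well placed, as the displayed inequalities only make sense under the sign convention of Theorem \ref{Pesin theorem}.
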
 
\begin{proof}
The proof is very similar to the previous theorem.
\end{proof}

In \cite{Kenny} and \cite{Wright1}, estimates are found for the Hausdorff dimension using different methods. The latter gives stronger estimates in some cases and applies to higher dimensions. 
\begin{corollary}
	The dimension of the non-wandering set are effectively the same as the estimates given in \cite{Wright1}.
\end{corollary}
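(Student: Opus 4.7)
The plan is to substitute the explicit conformal factors obtained in Section~8.3 into the two preceding theorems and then bound the resulting integrals by purely geometric quantities. Recall that for a planar open billiard,
\[
a^{(u)}(x) = 1 + d(x)\, k(x), \qquad a^{(s)}(x) = \frac{1}{1 + d(x)\, k(x)},
\]
so that $\log|a^{(u)}(x)| = \log(1 + d(x)k(x))$ and $-\log|a^{(s)}(x)| = \log(1 + d(x)k(x))$, both strictly positive on $M_0$.

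First I would observe that both $d(x)$ and $k(x)$ are bounded above and below by constants on the non-wandering set: the distance between consecutive collisions satisfies $d_{\min} \le d(x) \le d_{\max}$ by the no-eclipse condition together with compactness of $K$, and the curvature $k(x)$ of the unstable convex front satisfies $k_{\min} \le k(x) \le k_{\max}$ by the invariant-cone estimates recalled in Section~\ref{6 Curvature of unstable manifolds} (and established in \cite{Kenny,Wright1}). Consequently, for every $x \in M_0$,
\[
\log(1 + d_{\min}k_{\min}) \;\le\; \log|a^{(u)}(x)| \;=\; -\log|a^{(s)}(x)| \;\le\; \log(1 + d_{\max}k_{\max}).
\]

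Next, since $\mu_0$, $\kappa^{(u)}$, and $\kappa^{(s)}$ are probability measures on $M_0$, integrating these uniform bounds yields the same inequalities for $\int \log|a^{(u)}|\,d\mu$ and for $\int (-\log|a^{(s)}|)\,d\mu$ under each of these measures. Feeding the lower and upper bounds into the two preceding theorems then gives
\[
\frac{\log(m-1)}{\log(1+d_{\max}k_{\max})} \;\le\; \mathcal{D}^{(u)} \;\le\; \frac{\log(m-1)}{\log(1+d_{\min}k_{\min})},
\]
and the identical pair of inequalities for $\mathcal{D}^{(s)}$ (using $|\log|a^{(s)}|| = \log(1+dk)$ throughout, which accounts for the sign in the statement of the previous theorem).

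Finally, by Theorem~\ref{Pesin theorem}(3) the Hausdorff (and box) dimension of $M_0$ equals $\mathcal{D}^{(u)} + \mathcal{D}^{(s)}$, so summing the two sets of inequalities produces
\[
\frac{2\log(m-1)}{\log(1+d_{\max}k_{\max})} \;\le\; \dim_H M_0 \;\le\; \frac{2\log(m-1)}{\log(1+d_{\min}k_{\min})},
\]
which is precisely the estimate of \cite{Kenny,Wright1}. There is no real obstacle here beyond bookkeeping with the sign of $\log|a^{(s)}|$; the only nontrivial ingredients are the uniform bounds on $k(x)$, which are already in place, and Theorem~\ref{Pesin theorem}, which is cited from \cite{Pesinbook}.
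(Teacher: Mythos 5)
Your proposal is correct and follows essentially the same route as the paper: bound $d(x)$ and $k(x)$ uniformly on $M_0$, integrate $\log(1+d(x)k(x))$ against the probability measures $\mu_0$ and $\kappa^{(u)}$, $\kappa^{(s)}$, feed the resulting bounds into the two preceding theorems, and sum via Theorem~\ref{Pesin theorem}(3). Your explicit handling of the sign of $\log|a^{(s)}|$ is a small point of extra care that the paper glosses over, but it does not change the argument.
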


\begin{proof}
Using the estimates for $d$ and $k$ in \cite{Wright1}, we have 
\begin{align*}
\log(1 + d_{\min} k_{\min})				&\leq \int_{M_0} \log (1 + d(x) k(x)) d\kappa^{(u)} = \int_{M_0} \log |a^{(u)}| d\kappa^{(u)},\\
\int_{M_0} \log |a^{(u)}| d\mu_0	&= \int_{M_0} \log (1 + d(x) k(x)) d\mu_0 \leq \log(1 + d_{\max} k_{\max}).
\end{align*}
The same holds for the stable manifolds, so we have

$$\frac{2\log(m-1)}{\log(1 + d_{\max} k_{\max})} \leq \mathcal{D}^{(u)} + \mathcal{D}^{(s)} = \dim_H M_0 \leq \frac{2\log(m-1)}{\log(1 + d_{\min} k_{\min})}$$
which is the estimate in \cite{Wright1} (Theorem 2.1 (i)).
\end{proof}

\section{Derivative of Hausdorff dimension} \label{6 Derivative of dim}

In this section we show that the Hausdorff dimension of the non-wandering set is differentiable with respect to $\alpha$ and that its derivative is bounded by a constant depending only on the deformation. We use the definition of topological pressure $P_{\Sigma^+}$ based on the Ruelle operator from Section \ref{PP pressure}. Recall that for any function $\psi: M_0 \rightarrow \mathbb{R}$, this is related to the classical topological pressure by 
$$P_{\Sigma^+}(\psi \circ \chi^+_{\alpha,\zeta}) = P_{\text{classical}}(B, \psi, M_0).$$
We can rewrite Bowen's equation using this definition of pressure. The Hausdorff dimensions $\mathcal{D}^{(u)}$ and $\mathcal{D}^{(s)}$ are given by
$$P_{\Sigma^+}(\mathcal{D}^{(u)} \psi_{\alpha, \zeta}^{(u)}) = P_{\Sigma^+}(\mathcal{D}^{(s)} \psi_{\alpha, \zeta}^{(s)}) = 0$$
We will focus on the unstable manifolds first.

\begin{theorem}
	Let $K(\alpha)$ be a $\mathcal{C}^{(r, r')}$ billiard deformation with $r \geq 4, r' \geq 2$. Then the Hausdorff dimension $\mathcal{D}^{(u)}$ is at least $\mathcal{C}^{\min\{r-3, r'-1\}}$ with respect to $\alpha$.
\end{theorem}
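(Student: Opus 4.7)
The plan is to apply the implicit function theorem to Bowen's equation, regarded as an equation in two variables $t$ and $\alpha$. Define
\[
F(t,\alpha) = P_{\Sigma^+}\bigl(t\, \psi^{(u)}_{\alpha,\zeta}\bigr),
\]
so that $F(\mathcal{D}^{(u)}(\alpha),\alpha)=0$ by the reformulation of Bowen's equation given above. The strategy has three steps: verify that $F$ has the required joint regularity, verify that $\partial F/\partial t$ does not vanish at a solution, and then invoke the implicit function theorem to transfer the regularity to $\mathcal{D}^{(u)}(\alpha)$.

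For the $t$-dependence, standard Ruelle--Parry--Pollicott theory (in particular the real analyticity of the leading eigenvalue of the Ruelle operator in the potential) implies that $t\mapsto F(t,\alpha)$ is real analytic for each fixed $\alpha$. By Proposition \ref{PP prop},
\[
\frac{\partial F}{\partial t}(t,\alpha) = \int_{\Sigma^+} \psi^{(u)}_{\alpha,\zeta}\, d\tilde{\mu}\bigl(t\psi^{(u)}_{\alpha,\zeta}\bigr),
\]
and since $\psi^{(u)}_{\alpha,\zeta} = \log(1+dk) \geq \log(1+d_{\min}k_{\min}) > 0$ uniformly on $\Sigma^+$, the partial derivative is bounded below by $\log(1+d_{\min}k_{\min}) > 0$. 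In particular $\partial F/\partial t \neq 0$ at any zero of $F$.

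For the $\alpha$-dependence, the key input is that in Section \ref{6 Curvature of unstable manifolds} we showed $\psi^{(u)}_{\alpha,\zeta}(\xi)$ is $\mathcal{C}^{\min\{r-3, r'-1\}}$ in $\alpha$ uniformly in $\xi$, with all derivatives bounded. For the first derivative, Corollary \ref{PP cor} applies directly with $f_s = s\,\psi^{(u)}_{\alpha_0,\zeta} + (\alpha-\alpha_0)$-corrections, giving
\[
\frac{\partial F}{\partial \alpha}(t,\alpha) = t \int_{\Sigma^+} \frac{\partial \psi^{(u)}_{\alpha,\zeta}}{\partial \alpha}\, d\tilde{\mu}\bigl(t\psi^{(u)}_{\alpha,\zeta}\bigr),
\]
which is bounded by $|t|\,C_\psi$. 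For higher $\alpha$-derivatives one iterates: each differentiation of $P$ in the potential direction produces an integral against the equilibrium measure $\tilde\mu$, and standard thermodynamic formalism gives that $\tilde\mu$ itself depends real-analytically on the potential (in the Banach space of Lipschitz or Hölder potentials on $\Sigma^+$). Combining this with the $\mathcal{C}^{\min\{r-3, r'-1\}}$ smoothness of the potential $t\psi^{(u)}_{\alpha,\zeta}$ in $\alpha$ (and its analyticity in $t$), a Fa\`a di Bruno-type expansion as in Lemma \ref{Higher derivatives 2D} yields that $F \in \mathcal{C}^{\min\{r-3,r'-1\}}$ jointly in $(t,\alpha)$.

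Given these two steps, the implicit function theorem produces a unique $\mathcal{C}^{\min\{r-3, r'-1\}}$ function $\alpha \mapsto \mathcal{D}^{(u)}(\alpha)$ solving $F=0$, with
\[
\frac{d \mathcal{D}^{(u)}}{d \alpha} = -\frac{\partial F/\partial \alpha}{\partial F/\partial t} = -\frac{\mathcal{D}^{(u)} \int_{\Sigma^+} \partial_\alpha \psi^{(u)}_{\alpha,\zeta}\, d\kappa^{(u)}}{\int_{\Sigma^+} \psi^{(u)}_{\alpha,\zeta}\, d\kappa^{(u)}},
\]
whence $|d\mathcal{D}^{(u)}/d\alpha| \leq \mathcal{D}^{(u)} C_\psi / \log(1+d_{\min}k_{\min})$. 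The main obstacle is the higher-order smoothness of $F$ in $\alpha$: Corollary \ref{PP cor} is only stated for the first derivative, so the technical work is in justifying that the recursive formula for higher derivatives of $P$ remains uniformly controlled, which follows because each successive derivative expresses as a polynomial in integrals of bounded quantities against the equilibrium measure, and the equilibrium measure itself depends smoothly on the potential.
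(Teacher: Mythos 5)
Your proposal is correct and follows essentially the same route as the paper: apply the implicit function theorem to $Q(D,\alpha)=P_{\Sigma^+}(D\psi^{(u)}_{\alpha,\zeta})$, using analyticity of the pressure in the potential for the $D$-direction and the $\mathcal{C}^{\min\{r-3,r'-1\}}$ smoothness of $\psi^{(u)}_{\alpha,\zeta}$ for the $\alpha$-direction. The only (harmless) divergence is in verifying $\partial Q/\partial D\neq 0$: you bound $\int\psi^{(u)}\,d\tilde\mu$ below by $\log(1+d_{\min}k_{\min})>0$ directly, whereas the paper uses Bowen's equation and the variational principle to identify this derivative with $-h_{\kappa^{(u)}_0}/D_0$ and then invokes positivity of the entropy.
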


\begin{proof}
We will use the implicit function theorem \cite{IFT} for the following function of two variables:
$$Q(D, \alpha) = P_{\Sigma^+}(D \psi^{(u)}_{\alpha,\zeta}).$$
Choose an arbitrary $\alpha_0 \in I$, and let $D_0 \in \mathbb{R}^+$ be such that $P_{\Sigma^+}(D_0 \psi^{(u)}_{\alpha_0,\zeta}) = 0$. Note that $D_0 \neq 0$ because the entropy is nonzero. Let $\tilde{\kappa}^{(u)}_0$ be the unique equilibrium measure corresponding to $- D_0 \psi_{\alpha_0,\zeta}^{(u)}$. First we show that 
$$\left. \frac{\partial}{\partial D} Q(D, \alpha) \right|_{D_0, \alpha_0} \neq 0.$$
By Proposition \ref{PP prop}, we have

$$\left. \frac{\partial}{\partial D} P_{\Sigma^+} \left(D_0 \psi^{(u)}_{\alpha_0,\zeta} + (D - D_0) \psi^{(u)}_{\alpha_0,\zeta} \right) \right|_{D - D_0 = 0} = \int_{\Sigma^+} \psi^{(u)}_{\alpha_0,\zeta} d \tilde{\kappa}^{(u)}_0.$$
By Bowen's equation and the variational principle we have
$$0 = P(D_0 \psi^{(u)}_{\alpha_0,\zeta}) = h_{\kappa^{(u)}_0} + \int_{\Sigma^+} D_0 \psi^{(u)}_{\alpha_0,\zeta} d\tilde{\kappa}^{(u)}_0.$$
Rearranging this we get 
$$\left. \frac{\partial}{\partial D} Q(D, \alpha) \right|_{D_0, \alpha_0} = \int_{\Sigma^+} \psi^{(u)}_{\alpha_0,\zeta} d \tilde{\kappa}^{(u)}_0 = -\frac{h_{\kappa^{(u)}_0}}{D_0}.$$
Since the entropy is never zero in our model, this is non-zero. Since $P$ is analytic, the map $\alpha \mapsto P(D \psi^{(u)}_{\alpha, \zeta})$ is $\mathcal{C}^{\min\{r-3, r'-1\}}$ and the map $D \mapsto P(D \psi^{(u)}_{\alpha, \zeta})$ is analytic. So the implicit function theorem (and its corollary) is applicable. There exists a $\mathcal{C}^{\min\{r - 3, r' - 1\}}$ function $\mathcal{D}^{(u)}(\alpha)$, such that $P(\mathcal{D}^{(u)}(\alpha) \psi^{(u)}_{\alpha, \zeta}) = 0$ for all $\alpha \in I$, and furthermore

\begin{equation} \label{eq: IFT pressure}
\frac{\partial Q}{\partial \alpha}(\mathcal{D}^{(u)}(\alpha), \alpha) + \frac{\partial \mathcal{D}^{(u)}}{\partial \alpha} \frac{\partial Q}{\partial D} \left(\mathcal{D}^{(u)}(\alpha), \alpha \right) = 0.
\end{equation}
Since $P(\mathcal{D}^{(u)}(\alpha) \psi^{(u)}_{\alpha, \zeta}) = 0$, the function $\mathcal{D}^{(u)}(\alpha)$ is precisely the Hausdorff dimension of $M_0 \cap U$ for any $U \subset W^{(u)}(z)$ with $M_0 \cap U \neq \emptyset$, for any $z = \chi \zeta \in M_0$ and any $\alpha \in I$. The dimension $\mathcal{D}^{(u)}$ does not depend on the choice of $\zeta$.
\end{proof}

\begin{theorem}
	Let $K(\alpha)$ be a $\mathcal{C}^{(r, r')}$ billiard deformation with $r \geq 4, r' \geq 3$. Then $\frac{\partial D^{(u)}}{\partial \alpha}$ is bounded.
\end{theorem}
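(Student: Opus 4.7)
The plan is to differentiate the implicit relation (8.0.2) obtained in the previous theorem and bound each piece explicitly in terms of the deformation constants. Rearranging that equation gives
\[
\frac{\partial \mathcal{D}^{(u)}}{\partial \alpha}(\alpha) \;=\; -\,\frac{\partial Q/\partial \alpha\bigl(\mathcal{D}^{(u)}(\alpha),\alpha\bigr)}{\partial Q/\partial D\bigl(\mathcal{D}^{(u)}(\alpha),\alpha\bigr)},
\]
so it suffices to bound the numerator from above and the denominator away from zero uniformly in $\alpha$.

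For the denominator, the computation already carried out in the previous theorem yields
\[
\frac{\partial Q}{\partial D}\bigl(\mathcal{D}^{(u)}(\alpha),\alpha\bigr) \;=\; \int_{\Sigma^+} \psi^{(u)}_{\alpha,\zeta}\, d\tilde\kappa^{(u)}.
\]
Since $\psi^{(u)}_{\alpha,\zeta}(\xi)=\log\bigl(1+d(\chi^+_{\alpha,\zeta}\xi)\,k(\chi^+_{\alpha,\zeta}\xi)\bigr) \geq \log(1+d_{\min}k_{\min})>0$, the denominator is bounded below in absolute value by the geometric constant $\log(1+d_{\min}k_{\min})$, uniformly in $\alpha$ and $\zeta$.

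For the numerator, I will apply Corollary \ref{PP cor} with $f_s(\xi) = D\,\psi^{(u)}_{s,\zeta}(\xi)$ and $s=\alpha$; the required $\mathcal{C}^2$-regularity in $s$ follows from the regularity estimates of Section \ref{6 Derivatives of other properties} together with the hypotheses $r\geq 4,\,r'\geq 3$. This gives
\[
\frac{\partial Q}{\partial \alpha}\bigl(\mathcal{D}^{(u)}(\alpha),\alpha\bigr) \;=\; \mathcal{D}^{(u)}(\alpha)\int_{\Sigma^+} \frac{\partial \psi^{(u)}_{\alpha,\zeta}}{\partial \alpha}\, d\tilde\kappa^{(u)},
\]
and the pointwise bound $|d\psi^{(u)}_{\alpha,\zeta}/d\alpha|\le C_\psi$ from Section \ref{6 Curvature of unstable manifolds} together with the upper bound $\mathcal{D}^{(u)}\le \log(m-1)/\log(1+d_{\min}k_{\min})$ from Section 9 gives
\[
\left|\frac{\partial Q}{\partial \alpha}\bigl(\mathcal{D}^{(u)}(\alpha),\alpha\bigr)\right| \;\le\; \frac{\log(m-1)\,C_\psi}{\log(1+d_{\min}k_{\min})}.
\]

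Combining the two estimates yields
\[
\left|\frac{\partial \mathcal{D}^{(u)}}{\partial \alpha}\right| \;\le\; \frac{\log(m-1)\,C_\psi}{\bigl(\log(1+d_{\min}k_{\min})\bigr)^{2}},
\]
a constant depending only on the deformation constants and simple geometrical characteristics of the obstacles, which proves the theorem. The main subtle point is the application of Corollary \ref{PP cor}: the family $\alpha\mapsto \psi^{(u)}_{\alpha,\zeta}$ is only as smooth as the stable/unstable curvatures, and verifying the hypothesis of Corollary \ref{PP cor} is precisely where the assumptions $r\ge 4$, $r'\ge 3$ are consumed; everything else reduces to the uniform pointwise bounds already established for $d,\kappa,\phi,\gamma,k$ and $\psi^{(u)}$.
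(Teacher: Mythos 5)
Your proposal is correct and follows essentially the same route as the paper: differentiate the implicit relation from the previous theorem, apply Corollary \ref{PP cor} to compute $\partial Q/\partial\alpha$ as $\mathcal{D}^{(u)}\int \partial_\alpha\psi^{(u)}_{\alpha,\zeta}\,d\tilde\kappa^{(u)}$, and bound the integrand by $C_\psi$. The only difference is cosmetic: you bound the denominator directly by $\log(1+d_{\min}k_{\min})$ and insert the explicit upper bound on $\mathcal{D}^{(u)}$, whereas the paper leaves its bound as $\mathcal{D}^{(u)}(\alpha)^2 C_\psi/h_{\kappa^{(u)}}$ and only makes it explicit in the subsequent theorem; the two are equivalent via $h_{\kappa^{(u)}}/\mathcal{D}^{(u)}=\int\log|a^{(u)}|\,d\kappa^{(u)}$.
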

\begin{proof}
Since $r' \geq 3$, $\psi^{(u)}_{\alpha, \zeta}$ is at least $\mathcal{C}^2$. So by Corollary \ref{PP cor}, we have 

$$\left. \frac{\partial }{\partial \alpha} P(D_0 \psi^{(u)}_{\alpha, \zeta}) \right|_{\alpha = \alpha_0} = D_0 \int_{\Sigma^+} \left. \frac{\partial \psi^{(u)}_{\alpha, \zeta}}{\partial \alpha} \right|_{\alpha = \alpha_0} d \tilde{\kappa}^{(u)}_0.$$
Then (\ref{eq: IFT pressure}) becomes

$$0 = \left. \frac{\partial Q}{\partial \alpha} + \frac{\partial \mathcal{D}^{(u)}}{\partial \alpha} \frac{\partial Q}{\partial D} \right|_{\alpha = \alpha_0}  = \left. \mathcal{D}^{(u)} \int_{\Sigma^+} \frac{\partial \psi^{(u)}_{\alpha, \zeta}}{\partial \alpha} d \tilde{\kappa}^{(u)}_0 - \frac{\partial \mathcal{D}^{(u)}}{\partial \alpha} \frac{h_{\kappa^{(u)}_0}}{\mathcal{D}^{(u)}}  \right|_{\alpha = \alpha_0} .$$
Since $\alpha_0$ was chosen arbitrarily, for any $\alpha \in I$ we have
\begin{equation}
\frac{\partial \mathcal{D}^{(u)}}{\partial \alpha}	= \frac{\mathcal{D}^{(u)}(\alpha) ^2}{h_{\kappa^{(u)}}} \int_{\Sigma^+} \frac{\partial \psi^{(u)}_{\alpha, \zeta}}{\partial \alpha} d \tilde{\kappa}^{(u)}.
\end{equation}
The integrand is bounded by $C_\psi$, and $\tilde{\kappa}^{(u)}$ is a probability measure, so we have
$$\left| \frac{\partial \mathcal{D}^{(u)}}{\partial \alpha} \right| \leq \frac{\mathcal{D}^{(u)}(\alpha)^2 }{h_{\kappa^{(u)}}} C_\psi.$$
\end{proof}

\begin{theorem}
	Let $K(\alpha)$ be a $\mathcal{C}^{r, r'}$ billiard deformation with $r \geq 4, r' \geq 3$.
	For any sequence $\zeta$, the Hausdorff dimension $D^{(s)}$ of $M_0 \cap W^{(s)}(\chi z)$ is at least $\mathcal{C}^{\min\{r-3, r'-1 \}}$ with respect to $\alpha$, and its first derivative is bounded by $$\frac{D^{(s)}(\alpha)^2 C_\psi}{h_{\kappa^{(s)}}}.$$
\end{theorem}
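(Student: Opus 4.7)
The plan is to mirror the proof of the corresponding result for $\mathcal{D}^{(u)}$ essentially verbatim, since the stable case is symmetric to the unstable one via the reflection map Refl, and all of the ingredients needed have already been set up in Section \ref{6 Curvature of unstable manifolds} and the preceding theorems. In particular, the function $\psi^{(s)}_{\alpha,\zeta}$ is of exactly the same form as $\psi^{(u)}_{\alpha,\zeta}$ up to sign, and by the reflection argument we already have the uniform estimate $|d\psi^{(s)}_{\alpha,\zeta}/d\alpha| \leq C_\psi$ on $\Sigma^+$.

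First I would fix an arbitrary $\alpha_0 \in I$ and let $D_0 > 0$ be the unique root of $P_{\Sigma^+}(D_0 \psi^{(s)}_{\alpha_0,\zeta}) = 0$ (which is positive because the topological entropy $\log(m-1)$ is positive). I would then apply the implicit function theorem to the function $Q(D,\alpha) = P_{\Sigma^+}(D\psi^{(s)}_{\alpha,\zeta})$ at $(D_0,\alpha_0)$. To check the non-degeneracy hypothesis $\partial_D Q \neq 0$, I would use Proposition \ref{PP prop} to write
$$\left.\frac{\partial Q}{\partial D}\right|_{(D_0,\alpha_0)} = \int_{\Sigma^+} \psi^{(s)}_{\alpha_0,\zeta}\, d\tilde{\kappa}^{(s)}_0,$$
and then combine this with Bowen's equation and the variational principle, $0 = h_{\kappa^{(s)}_0} + D_0 \int_{\Sigma^+}\psi^{(s)}_{\alpha_0,\zeta}\,d\tilde{\kappa}^{(s)}_0$, to obtain $\partial_D Q|_{(D_0,\alpha_0)} = -h_{\kappa^{(s)}_0}/D_0 \neq 0$. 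Since $P$ is analytic in $D$ and $\psi^{(s)}_{\alpha,\zeta}$ is $\mathcal{C}^{\min\{r-3,r'-1\}}$ in $\alpha$ by Section \ref{6 Curvature of unstable manifolds}, the implicit function theorem produces a $\mathcal{C}^{\min\{r-3,r'-1\}}$ function $\mathcal{D}^{(s)}(\alpha)$ solving $P_{\Sigma^+}(\mathcal{D}^{(s)}(\alpha)\psi^{(s)}_{\alpha,\zeta}) = 0$, which by Theorem \ref{Pesin theorem} is the Hausdorff dimension on stable leaves.

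Next, since $r' \geq 3$ ensures $\psi^{(s)}_{\alpha,\zeta}$ is at least $\mathcal{C}^2$ in $\alpha$, Corollary \ref{PP cor} applies and gives
$$\left.\frac{\partial Q}{\partial \alpha}\right|_{(D_0,\alpha_0)} = D_0 \int_{\Sigma^+} \left.\frac{\partial \psi^{(s)}_{\alpha,\zeta}}{\partial \alpha}\right|_{\alpha_0} d\tilde{\kappa}^{(s)}_0.$$
Combining the two partial derivatives through the implicit function relation $\partial_\alpha Q + (\partial_D Q)(\partial_\alpha \mathcal{D}^{(s)}) = 0$ yields, for every $\alpha \in I$,
$$\frac{\partial \mathcal{D}^{(s)}}{\partial \alpha} = \frac{\mathcal{D}^{(s)}(\alpha)^2}{h_{\kappa^{(s)}}} \int_{\Sigma^+} \frac{\partial \psi^{(s)}_{\alpha,\zeta}}{\partial \alpha}\, d\tilde{\kappa}^{(s)}.$$
Since $\tilde{\kappa}^{(s)}$ is a probability measure and $|\partial_\alpha \psi^{(s)}_{\alpha,\zeta}| \leq C_\psi$ pointwise, the stated bound follows immediately.

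There is no real obstacle here beyond bookkeeping: the entire argument is a sign-flipped copy of the two preceding theorems for $\mathcal{D}^{(u)}$. The only place where one might be tempted to redo work is in verifying the derivative bound for $\psi^{(s)}_{\alpha,\zeta}$, but that estimate was already established by the reflection symmetry argument in Section \ref{6 Curvature of unstable manifolds}, so the proof reduces to citing it.
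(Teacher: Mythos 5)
Your proposal is correct and matches the paper's approach: the paper simply notes that $\psi^{(s)}_{\alpha,\zeta} = -\psi^{(u)}_{\alpha,\zeta}$ by the reflection property and states that the proof is ``very similar to the previous theorem,'' which is exactly the sign-flipped repetition of the unstable-case argument that you carry out in detail.
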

\begin{proof}
By the reflection property $W^{(u)}_{\varepsilon}(z) = \mbox{Refl} W_{\varepsilon}^{(s)}(\mbox{Refl}(z))$, we have 
$$\psi_{\alpha, \zeta}^{(s)} = - \psi_{\alpha, \zeta}^{(u)} \mbox{ and } \mathcal{D}^{(s)} = \mathcal{D}^{(u)}.$$
So the proof is very similar to the previous theorem. 
\end{proof}

\begin{theorem}
	Let $K(\alpha)$ be a $\mathcal{C}^{r, r'}$ billiard deformation with $r \geq 4, r' \geq 3$. The Hausdorff dimension $\mathcal{D}(\alpha) = \mathcal{D}^{(u)} + \mathcal{D}^{(s)}$ of the non-wandering set is $\mathcal{C}^{\min\{r-3, r'-1 \}}$ with respect to $\alpha$, and its derivative is bounded by the following.
	$$\left| \frac{d \mathcal{D}}{d \alpha} \right| \leq \frac{C_\psi \mathcal{D}}{\log (1 + d_{\min} k_{\min})}.$$
\end{theorem}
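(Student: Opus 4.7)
The plan is to combine the two preceding theorems (which handle $\mathcal{D}^{(u)}$ and $\mathcal{D}^{(s)}$ separately) and then convert the entropies appearing in their derivative bounds into the geometric quantity $\log(1 + d_{\min} k_{\min})$ by invoking Bowen's equation together with the variational principle.

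The smoothness assertion is essentially free. Since the two previous theorems give that both $\mathcal{D}^{(u)}(\alpha)$ and $\mathcal{D}^{(s)}(\alpha)$ are $\mathcal{C}^{\min\{r-3,r'-1\}}$ on $I$, their sum $\mathcal{D}(\alpha) = \mathcal{D}^{(u)}(\alpha) + \mathcal{D}^{(s)}(\alpha)$ is of the same class. (One could additionally observe that the reflection property $\psi^{(s)}_{\alpha,\zeta} = -\psi^{(u)}_{\alpha,\zeta}$ used in the previous theorem gives $\mathcal{D}^{(s)} = \mathcal{D}^{(u)}$, so $\mathcal{D} = 2\mathcal{D}^{(u)}$, but this is not needed to obtain the stated constant.)

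For the derivative bound, the first step is the triangle inequality combined with the two previous theorems, giving
$$\left| \frac{d \mathcal{D}}{d \alpha} \right| \leq \left| \frac{d \mathcal{D}^{(u)}}{d \alpha} \right| + \left| \frac{d \mathcal{D}^{(s)}}{d \alpha} \right| \leq \frac{C_\psi (\mathcal{D}^{(u)})^2}{h_{\kappa^{(u)}}} + \frac{C_\psi (\mathcal{D}^{(s)})^2}{h_{\kappa^{(s)}}}.$$
The next step is to bound the entropies from below. Since $\kappa^{(u)}$ is the equilibrium measure for $-\mathcal{D}^{(u)} \log|a^{(u)}|$, Bowen's equation and the variational principle give
$$0 = P(-\mathcal{D}^{(u)} \log|a^{(u)}|) = h_{\kappa^{(u)}} - \mathcal{D}^{(u)} \int_{M_0} \log|a^{(u)}|\, d\kappa^{(u)}.$$
Because $|a^{(u)}(x)| = 1 + d(x)k(x) \geq 1 + d_{\min} k_{\min}$, this yields $h_{\kappa^{(u)}} \geq \mathcal{D}^{(u)} \log(1 + d_{\min} k_{\min})$. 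The symmetric argument, applied to $\kappa^{(s)}$ with $|a^{(s)}(x)| = 1/(1 + d(x)k(x))$, gives $h_{\kappa^{(s)}} \geq \mathcal{D}^{(s)} \log(1 + d_{\min} k_{\min})$.

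Substituting these two entropy lower bounds into the previous inequality cancels one factor of $\mathcal{D}^{(u)}$ and one factor of $\mathcal{D}^{(s)}$, and collapses the sum to
$$\left| \frac{d \mathcal{D}}{d \alpha} \right| \leq \frac{C_\psi (\mathcal{D}^{(u)} + \mathcal{D}^{(s)})}{\log(1 + d_{\min} k_{\min})} = \frac{C_\psi \mathcal{D}}{\log(1 + d_{\min} k_{\min})},$$
as required. There is no real obstacle: the heavy lifting (differentiating the pressure, bounding $d\psi/d\alpha$ by $C_\psi$, and establishing $\mathcal{D}^{(u)}, \mathcal{D}^{(s)} \in \mathcal{C}^{\min\{r-3,r'-1\}}$) has already been done; the only new ingredient is the clean lower bound on the equilibrium-measure entropies coming from Bowen's equation, which is the natural way to replace the implicit quantity $h_{\kappa^{(u,s)}}$ with an explicit geometric constant.
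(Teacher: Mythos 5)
Your proposal is correct and follows essentially the same route as the paper: triangle inequality on $\mathcal{D} = \mathcal{D}^{(u)} + \mathcal{D}^{(s)}$, the derivative bounds from the two preceding theorems, and then the lower bound $h_{\kappa^{(u,s)}} \geq \mathcal{D}^{(u,s)}\log(1+d_{\min}k_{\min})$. The only cosmetic difference is that you re-derive that entropy bound from Bowen's equation and the variational principle, whereas the paper simply cites the identity $\mathcal{D}^{(u,s)} = h_{\kappa^{(u,s)}}/\int\log|a^{(u,s)}|\,d\kappa^{(u,s)}$ from the final part of Theorem \ref{Pesin theorem} -- the same fact.
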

\begin{proof}
Since $B$ is conformal, the dimension satisfies $\mathcal{D}(\alpha) = \mathcal{D}^{(s)}(\alpha) + \mathcal{D}^{(u)}(\alpha)$. So
\begin{align*}
\left| \frac{d \mathcal{D}}{d \alpha} \right|	&\leq \frac{\mathcal{D}^{(s)}(\alpha)^2 C_\psi}{h_{\kappa^{(s)}}} + \frac{\mathcal{D}^{(u)}(\alpha)^2 C_\psi}{h_{\kappa^{(u)}}}\\
&\leq \frac{C_\psi \mathcal{D}^{(s)}(\alpha)}{\log(1 + d_{\min} k_{\min})} + \frac{C_\psi \mathcal{D}^{(u)}(\alpha)}{\log(1 + d_{\min} k_{\min})}\\
&\leq \frac{C_\psi \mathcal{D}(\alpha)}{\log(1 + d_{\min} k_{\min})}.
\end{align*}

This follows from the final part of Theorem \ref{Pesin theorem}.
\end{proof}

\begin{remark}
	Note that $r' \geq 3$ is only required for estimating the first derivative. To find the differentiability class of the Hausdorff dimension only $r' \geq 2$ is required.
\end{remark}

\begin{corollary}
	Let $K(\alpha)$ be a real analytic billiard deformation (i.e. real analytic in both $\alpha$ and the parameter $u$). Then $\mathcal{D}(\alpha)$ is a real analytic function of $\alpha$.
\end{corollary}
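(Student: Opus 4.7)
The plan is to retrace the proof of the preceding theorem (which established $\mathcal{C}^{\min\{r-3,r'-1\}}$-smoothness of $\mathcal{D}$), replacing each finite-differentiability input with its real-analytic counterpart. The argument reduces to three ingredients: (i) real analyticity of $u_j(\xi,\alpha)$, $d_j(\alpha)$, $\kappa_j(\alpha)$, and consequently of the curvature $k$ of the convex fronts; (ii) real analyticity, in a suitable Banach-space sense, of the map $\alpha \mapsto \psi^{(u)}_{\alpha,\zeta}$ viewed as a curve in $F_\theta(\Sigma^+)$; and (iii) real analyticity of the topological pressure functional $P_{\Sigma^+}$.

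For (i), I would first apply the analytic implicit function theorem to the equation $\nabla_u G(u,\alpha)=0$. The length function $G$ is now real analytic in $(u,\alpha)$ because the parametrizations $\varphi_j$ are, and the Hessian $H$ is non-singular by the proposition of Section 4. This yields analyticity of $u_j(\alpha)$ for periodic $\xi$, and analyticity of the derived quantities $d_j,\kappa_j,\phi_j,\gamma_j$. For aperiodic $\xi$, the uniform-convergence arguments of Section 6 carry over: on a complex neighbourhood of $I$, the periodic approximations $u_j(\xi^{(n)},\alpha)$ are analytic and converge uniformly, so the limit is analytic by Weierstrass's theorem. For the unstable curvature $k_0$, one iterates the contraction $k \mapsto k/(1+dk)+\gamma$ from the infinite past; since the contraction factor $\beta_j\le \beta_{\max}<1$ is uniform in $\alpha$ on a small complex neighbourhood, the resulting continued fraction converges geometrically and its analytic partial sums yield an analytic limit.

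For (ii), the estimates of Section 7 on $\tfrac{d}{d\alpha}\psi^{(u)}_{\alpha,\zeta}$ together with their higher-order analogues (obtainable by the same inductive scheme used in Lemma 6.5.1 and Corollary 6.6.3) give bounds that are summable across $\xi$ in the $\theta$-Hölder norm. Thus $\alpha \mapsto \psi^{(u)}_{\alpha,\zeta}$ extends to a holomorphic curve in the complex Banach space $F_\theta(\Sigma^+,\mathbb{C})$. For (iii), I would invoke the classical analytic perturbation theory of the Ruelle transfer operator: the leading eigenvalue of $L_\psi$ is a simple isolated eigenvalue depending analytically on $\psi\in F_\theta(\Sigma^+)$, so $P_{\Sigma^+}$ is a real analytic functional (see Parry--Pollicott \cite{Parry-Pollicott}, Chapter 5).

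Combining these, the scalar function $Q(D,\alpha)=P_{\Sigma^+}(D\,\psi^{(u)}_{\alpha,\zeta})$ is real analytic in $(D,\alpha)$. From the previous section, $\partial_D Q = -h_{\kappa^{(u)}}/\mathcal{D}^{(u)}\neq 0$ at a solution of Bowen's equation, so the analytic implicit function theorem produces a real analytic branch $\mathcal{D}^{(u)}(\alpha)$. The identical argument, using $\psi^{(s)}_{\alpha,\zeta}=-\psi^{(u)}_{\alpha,\zeta}$, handles $\mathcal{D}^{(s)}(\alpha)$, and $\mathcal{D}=\mathcal{D}^{(u)}+\mathcal{D}^{(s)}$ is therefore real analytic. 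I expect the main obstacle to be ingredient (ii): verifying that the potential depends analytically on $\alpha$ \emph{as a curve in the Banach space} $F_\theta(\Sigma^+)$, rather than merely pointwise in $\xi$, since only the Banach-valued version feeds into the analytic perturbation theory of the transfer operator. This requires careful bookkeeping of the $\alpha$-derivative bounds uniformly in $\xi$, using the exponential contraction in the continued-fraction representation of $k$.
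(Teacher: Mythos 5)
Your proposal follows essentially the same route as the paper: the analytic implicit function theorem applied to $\nabla_u G = 0$ gives analyticity of $u_j(\xi,\alpha)$ for periodic sequences, this is passed to the limit for aperiodic ones, the derived quantities $d_j,\kappa_j,\phi_j,\gamma_j$ and the convex-front curvature (via the recurrence $k_{j+1}=k_j/(1+d_jk_j)+\gamma_j$) inherit analyticity, and finally analyticity of the pressure plus the analytic implicit function theorem for Bowen's equation yields analyticity of $\mathcal{D}^{(u)}$, $\mathcal{D}^{(s)}$ and their sum. The two places where you diverge are both in your favour. For the aperiodic extension the paper works directly on the real line, writing $u_j(\xi,\alpha)$ as the limit of the Taylor series of the periodic approximants $u_j(\xi^{(n_{q,k})},\alpha)$ and interchanging $\lim_{k}$ with $\sum_{q}$ --- an interchange it does not justify (it would require control of $C_u^{(q)}/q!$ uniformly in $q$); your route via a uniform complex neighbourhood of holomorphy and Weierstrass's convergence theorem is cleaner, though it obliges you to check that the analytic implicit function theorem gives a radius of holomorphy independent of the period $n$, which does follow from the uniform bound $\|A^{-1}\|_\infty \le 1/(2\kappa_{\min})$ established in Section \ref{4 solving system}. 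Second, the paper disposes of the final step with the single assertion that ``the pressure is a real analytic operator,'' whereas you correctly isolate the genuine issue: analyticity of $\alpha\mapsto\psi^{(u)}_{\alpha,\zeta}$ must be established as a curve in the H\"older space on $\Sigma^+$, not merely pointwise in $\xi$, before the spectral perturbation theory of $L_\psi$ applies. That verification --- uniformity in $\xi$ of the complex bounds, using the contraction in the $k$-recurrence --- is the one ingredient you flag but do not carry out; the paper omits it entirely, so your sketch is, if anything, more honest about where the work lies.
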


\begin{proof}
Let $K(\alpha)$ be a real analytic billiard deformation. Then clearly the functions $F$, $G$ and $h$ are all real analytic. By the implicit function theorem for analytic functions \cite{AnalyticIFT}, $u_j(\xi, \alpha)$ is real analytic for any periodic admissible sequence $\xi$. To extend this to aperiodic sequences, we show that $u_j(\xi, \alpha)$ is equal to its Taylor series at any point $\alpha_0$ and for any $\xi \in \Sigma$. Recall from the proof of Corollary 6.6.2 that for each $q$ there exists a sequence $(\xi^{(n_{q,k})})_k$ such that $\displaystyle \lim_{k \rightarrow \infty} \xi^{(n_{q,k})} = \xi$, and
$$\frac{d^q}{d \alpha^q}  u_j(\xi, \alpha) = \lim_{k \rightarrow \infty} \frac{d^q}{d \alpha^q} u_j(\xi^{(n_{q,k})}, \alpha).$$
Now using the definition of real analyticity, we have
\begin{align*}
u_j(\xi, \alpha)	&= \lim_{k \rightarrow \infty} u_j(\xi^{(n_{q,k})}, \alpha)\\
&= \lim_{k \rightarrow \infty} \sum_{q=0}^\infty \frac{(\alpha - \alpha_0)^q}{q!} \left( \left. \frac{d^q}{d \alpha^q} u_j(\xi^{(n_{q,k})}, \alpha) \right|_{\alpha = \alpha_0} \right)\\
&= \sum_{q=0}^\infty \frac{(\alpha - \alpha_0)^q}{q!} \left( \left. \lim_{k \rightarrow \infty} \frac{d^q}{d \alpha^q}  u_j(\xi^{(n_{q,k})}, \alpha) \right|_{\alpha = \alpha_0} \right)\\
&= \sum_{q=0}^\infty \frac{(\alpha - \alpha_0)^q}{q!} \left( \left. \frac{d^q}{d \alpha^q} u_j(\xi, \alpha) \right|_{\alpha = \alpha_0} \right).
\end{align*}
Therefore $u_j(\xi, \alpha)$ is real analytic in $\alpha$. Now the quantities $p, d, \kappa, \phi$ and $\gamma$ are all easily shown to be real analytic. The recurrance relation for $k_j$ can be used to show that $k_j$ is real analytic, and therefore $\psi_{\alpha, \zeta}(\xi)$ is real analytic. Since the pressure is a real analytic operator, it follows that $\mathcal{D}(\alpha)$ is real analytic. 
\end{proof}

\section*{Acknowledgment} This work derives from the author's PhD thesis, ``Dimensional Characteristics of the Non-wandering Sets of Open Billiards''. The author wishes to thank L. Stoyanov for many helpful comments and suggestions.



\end{document}